\definecolor{darkblue}{rgb}{0,0,1}
\DeclareMathAlphabet{\mathbsf}{OT1}{cmss}{bx}{n}
\DeclareMathAlphabet{\mathssf}{OT1}{cmss}{m}{sl}
\DeclareMathAlphabet{\mathcsf}{OT1}{cmss}{sbc}{n}
\newcommand{\ie}{{\em i.e.}}
\newcommand{\etc}{{\em etc}}
\newcommand{\eg}{{\em e.g.}}
\newcommand{\iid}{i.i.d.}
\newcommand{\specialcell}[2][c]{%
  \begin{tabular}[#1]{@{}c@{}}#2\end{tabular}}
\newcommand{\secref}[1]{Section~\ref{#1}}
\newcommand{\tabref}[1]{Table~\ref{#1}}
\newcommand{\thrmref}[1]{Theorem~\ref{#1}}
\newcommand{\corolref}[1]{Corollary~\ref{#1}}
\newcommand{\keywords}[1]{\textbf{Keywords:} #1}
\newtheorem{prop}{Proposition}
\newtheorem{corol}{Corollary}
\def\blfootnote{\xdef\@thefnmark{}\@footnotetext}
\newtheorem{theorem}{Theorem}[section]
\date{}
\begin{document}
\title{\textbf{A projector--rank partition theorem for exact degrees of freedom in experimental design}}  
\author{Nagananda K G\thanks{The author is with Farlborz Maseeh Department of Mathematics and Statistics, Portland State University, Portland, OR 97201, USA. E-mail: \texttt{nanda@pdx.edu}.}}
\maketitle
\vspace{-2.cm}

\begin{abstract}
In many experimental designs\textemdash split-plots, blocked or nested layouts, fractional factorials, and studies with missing or unequal replication\textemdash standard ANOVA procedures no longer tell us exactly how many independent pieces of information each effect truly contributes. We provide a general degrees of freedom $(\mathrm{df})$ partition theorem that resolves this ambiguity.  For $N$ observations, we show that the total information in the data ({\ie}, $N-1$ $\mathrm{df}$) can be split exactly across experimental effects and randomization strata by projecting the data onto each stratum and counting the $\mathrm{df}$ each effect contributes there. This yields integer $\mathrm{df}$\textemdash not approximations\textemdash for any mix of fixed and random effects, blocking structures, fractionation, or imbalance.  This result yields closed-form $\mathrm{df}$ tables for unbalanced split-plot, row-column, lattice, and crossed-nested designs.  We introduce practical diagnostics\textemdash the $\mathrm{df}$-retention ratio $\rho$, df deficiency $\delta$, and variance-inflation index $\alpha$\textemdash that measure exactly how many $\mathrm{df}$ an effect retains under blocking or fractionation and the resulting loss of precision, thereby extending Box--Hunter's resolution idea to multi-stratum and incomplete designs.  Classical results emerge as corollaries: Cochran's one-stratum identity; Yates's split-plot $\mathrm{df}$; resolution-$R$ identified when an effect retains no $\mathrm{df}$.  Empirical studies on split-plot and nested designs, a blocked fractional-factorial design-selection experiment, and timing benchmarks show that our approach delivers calibrated error rates, recovers information to raise power by up to 60\% without additional runs, and is orders of magnitude faster than bootstrap-based $\mathrm{df}$ approximations.  The framework, therefore, offers both a unifying theory and immediately useful tools for planning and analyzing complex experiments.
\end{abstract}
\keywords{Experimental design, ANOVA, degrees of freedom, projection matrices, alias analysis.} 

\section{Introduction}\label{sec:introduction}
Modern experimentation, whether in agriculture, manufacturing, or the life sciences, rarely conforms to the tidy assumptions that powered the early triumphs of analysis of variance (ANOVA) pioneered by \cite{Fisher1935}. Blocking against environmental heterogeneity, nesting to respect production hierarchies, and deliberate fractionation to economize runs all collide in contemporary studies; missing plots and unequal replication often emerge as additional complications; see, for instance, \cite{Montgomery2020a}. At the center of every subsequent inference still lies a deceptively simple question: How many independent pieces of information\textemdash{\ie}, the degrees of freedom ($\mathrm{df}$)\textemdash does each factor really contribute? Once peripheral in balanced designs, $\mathrm{df}$ accounting now plays a central role in complex experimental analysis.

Classical rank additivity, first formalized by \cite{Cochran1941} for a single stratum of independent errors, states that for a total number $N$ of experimental observations, the $(N-1)$ $\mathrm{df}$ split neatly among orthogonal treatment contrasts. However, the validity of Cochran's identity is disrupted upon the introduction of a second randomization stage or the deliberate aliasing of higher-order interactions.  Empirical practice tried to cope with these departures through moment-matching approximations suggested by \cite{Satterthwaite1946} and \cite{Kenward1997} or ad hoc sums-of-squares types devised by \cite{Searle1981}.  However, such remedies sacrifice both mathematical exactness and interpretability\textemdash often yielding non-integer, sometimes negative, $\mathrm{df}$ in heavily unbalanced layouts.  \cite{Patterson1976} provide construction for lattices with complete and incomplete block designs, but $\mathrm{df}$ tables are only presented for balanced cases.  \cite{Verbeke2000} discuss how approximate $\mathrm{df}$ dominate certain design of experiments, despite these shortcomings.  \cite{Cornfield1956} established the use of idempotent-projector calculus for ANOVA, concentrating on one-stratum models without aliasing; \cite{Hinkelmann2005} present more recent applications of idempotent-projectors in modern experimental settings.  A parallel tradition, begun by \cite{Yates1935} and systematized by \cite{Bailey2008}, advocated idempotent projectors to represent randomization strata. While elegant for balanced split-plots, this projector calculus remained a craft discipline: each novel design demanded a fresh hand-drawn Hasse diagram (see, for example, \cite[Chapter 4]{Bailey2008}), and unequal replication placed the method beyond reach. 

In fractional factorial design, \cite{Box1961} revolutionized $2^{k-p}$ factorial planning, but its celebrated resolution measure presupposed complete balance and a single source of error, and works only for regular, fully replicated 2--level fractions, without blocking.  Pedagogical projector diagrams were presented by \cite{Bailey2008} to account for blocking; however, it lacks closed-form $\mathrm{df}$ under imbalance.  \cite{Giesbrecht1985} compared exact versus Satterthwaite $F-$tests in split-plot large-sample asymptotics settings.  The works of \cite{Searle1992, Harville1997, West2014} make advances in mixed-model effects, but all these treat $\mathrm{df}$ numerically rather than through analytic rank.  Another avenue has largely focussed on approximations of $\mathrm{df}$ under variance heterogeneity or robustness, but does not yield on an exact algebraic partition for multi-stratum designs.  For instance, \cite{Zhang2012} developed an approximate $\mathrm{df}$ test for heteroscedastic two-way ANOVA, while \cite{Bathke2004} studied when the classical $F$ with usual $\mathrm{df}$ remains valid under unequal variances and non-normality.  \cite{Wang2005} used modified approximate $F$-tests to size balanced mixed-ANOVA experiments; and \cite{Kulinskaya2007} proposed weighted one-way ANOVA with improved Welch-type $\mathrm{df}$.   \cite{McCloud2021} provided $\mathrm{df}$ calculations via hat-matrix arguments and explicitly contrast ANOVA versus non-ANOVA $\mathrm{df}$ again in the approximation/estimation framework.  In summary, the literatures on blocking, mixed models, and factorial aliasing have evolved in productive isolation.  To the best of our knowledge, an exact, algebraic $\mathrm{df}$ identity  that is simultaneously (i) multi-stratum, (ii) alias-aware, and (iii) robust to unequal replication is missing in the experimental design literature. 

This paper reunifies those strands through a $\mathrm{df}$ partition theorem that operates at the intersection of algebra and group representation. For a set $\mathcal{S}$ that indexes all randomization strata in the experiment, let $\mathbf{P}_s$ denote the $N \times N$ idempotent projector that averages observations within randomization stratum $s \in \mathcal{S}$ and let $\mathcal C_{\overline E}$ denote the factorial-contrast linear subspace of $\mathbb{R}^N$ attached to the effect $\overline{E}$ (an alias class comprising all factorial effects that collapse into the same contrast space once the defining relations of a fractional design are imposed).  By letting the family of stratum projectors ${\mathbf{P}_s}$ act jointly with the full level-permutation group $G$, we obtain a simultaneous diagonalization in which every fixed or random effect decomposes into orthogonal blocks $\mathcal C_{\overline E} \cap \operatorname{Im}\mathbf{P}_s$, where $\operatorname{Im}\mathbf{P}_s = \{\mathbf{P}_s\mathbf{v}, \mathbf{v} \in \mathbb{R}^N \}$ is the column space of the projector $\mathbf{P}_s$.  Essentially, when we intersect $\operatorname{Im}\mathbf{P}_s$ with a factorial-contrast space $\mathcal{C}_{\overline{E}}$, we isolate the part of effect $\overline{E}$ that is visible in stratum $s \in \mathcal{S}$.  The dimension of this intersection $\sum_{s}\sum_{\overline E}\dim\left(\mathcal C_{\overline E} \cap \operatorname{Im}\mathbf{P}_s\right)$ is exactly the number $\mathrm{df}$ that the effect retains and can be tested against the mean square residing in that stratum.  The dimension of each block\textemdash and, therefore, its $\mathrm{df}$\textemdash can be written in closed form via orbit-stabilizer counts described by \cite[Section 5.6]{Bailey2008}, regardless of balance or aliasing.   

The proposed theorem immediately produces the first exact $\mathrm{df}$ tables for the workhorses of mixed-model practice including the unbalanced split-plot, row-column, lattice, and general crossed-nested hierarchies. Where software has long fallen back on Satterthwaite or Kenward--Roger numbers, we supply integer ranks computable from small Gram matrices (see, for example, \cite[pp. 274]{Harville1997}). These formulas both generalize and strictly contain the balanced textbook results, collapsing to them when replication is equal.  Beyond enumeration, we introduce three quantitative aliasing metrics: The retention ratio $\rho(\overline{E})$ measures the proportion of an effect's ideal $\mathrm{df}$ that survive blocking and fractionation; its complement $\delta(\overline{E})$ pinpoints the exact $\mathrm{df}$ deficiency, and the variance-inflation index $\alpha(\overline{E}) \ge \rho(\overline{E})^{-1}$ bounds how severely any remaining contrasts are diluted. Taken together, they extend the Box--Hunter resolution (see \cite{Box1961}) from regular two-level fractions to arbitrary level structures, stratum multiplicities, and missing data.  \cite{Xu2001} analyze the evolution of resolution metrics for asymmetrical fractional factorial designs, but our rank-based work provides an algebraic alternative.  \cite{Deng1999} extend Box--Hunter ideas for non-regular fractional designs and backs our claim that $\rho(\overline E)$ generalizes resolution.  Our projector--rank partition theorem including $\rho$, $\delta$ and $\alpha$ indices, therefore, fill a genuine methodological gap.

A trio of classical results emerges as corollaries from the projector--rank framework. (i) Single-stratum case:  If the randomization consists of only one stratum $(\lvert \mathcal S \rvert = 1, \mathbf{P}_{s_1} = \mathbf{I}_{N})$, the theorem reduces to Cochran's rank-additivity identity $N-1 = \sum_{\overline E}\dim\mathcal C_{\overline E}$, reproducing exactly the fixed-effects result of \cite{Cochran1941}.  (ii) Balanced split-plot: Assigning factor $A$ to whole plots and factor $B$ to sub-plots with equal replication ($a$ levels, $b$ whole plots per level, $c$ sub-plots per whole plot) and substituting the corresponding projectors into the theorem yields the $\mathrm{df}$ table  derived by \cite{Yates1935}: $(a-1)$ for $A$, $a(b-1)$ for whole-plot error, $(c-1)$ for $B$, $(a-1)(c-1)$ for $AB$, and $a(b-1)(c-1)$ for sub-plot error.  (iii) Fractional-factorial resolution:  For a regular $2^{k-p}$ fraction, the df-retention ratio satisfies $\rho(\overline E)=0$ exactly when the effect order $|E|$ is at least the length of the shortest defining word.  Hence, the Box--Hunter resolution $R$ of the design is $R = \min\left\{|E|:\rho(\overline E)=0\right\}$, extending the classical definition to a rank-based statement that remains meaningful even when replication is unequal.

To clarify when exact integer $\mathrm{df}$ depart non-negligibly from Satterthwaite or Kenward--Roger approximations, we identify three structural mechanisms that recur across modern designs: (M1) stratum mismatch, where an effect's contrasts vary at a higher randomization level but the analysis uses a lower-level residual as denominator (typically inflating $\mathrm{df}$ and yielding liberal tests); (M2) aliasing/partial confounding, where blocking or fractionation compresses nominal contrast spaces so that only a lower-dimensional alias class is estimable (reducing retained $\mathrm{df}$ and power); and (M3) imbalance-induced rank collapse, where unequal replication or missingness causes discrete drops in the rank of projected contrast columns within key strata.  Building on these mechanisms, we provide practitioner diagnostics based solely on design metadata\textemdash stratum unit counts, incidence/replication profiles, and defining/block relations\textemdash that (i) flag effects whose nominal $\mathrm{df}$ cannot be retained in a given stratum, (ii) supply screening proxies for the retention ratio $\rho(\overline{E})$ and its associated deficiency $\delta(\overline{E})$, and (iii) indicate whether conventional analyses are likely to behave liberally (via M1) or conservatively (via M2/M3 and clustering). Finally, three schematic examples\textemdash a split-plot, a blocked fractional factorial with partial confounding, and an imbalanced factorial with missing cells\textemdash illustrate these mechanisms transparently and motivate why the projector--rank partition theorem provides a canonical, design-consistent $\mathrm{df}$ allocation where approximation-based procedures can fail.

Four empirical studies substantiate the framework's effectiveness in applied settings.  (i) A large-scale split-plot simulation\textemdash balanced and with 20\% missing sub-plots\textemdash shows that projector--rank $\mathrm{df}$ keeps the empirical type-I error for the whole-plot factor at the nominal 5\%, whereas a na\"{i}ve denominator inflates it roughly ten-fold.  (ii)  A nested Sire$\to$Dam$\to$Animal experiment reveals the opposite risk: na\"{i}ve one-way ANOVA becomes ultra-conservative ($\leq 1\%$ size), while the exact $\mathrm{df}$ stays calibrated, proving the method guards against both liberal and conservative distortions.  (iii) A timing benchmark demonstrates efficiency: for vectors up to $N=10^{5}$ the $\mathrm{df}$ partition routine finishes in under 10 ms, whereas a 100--draw parametric bootstrap\textemdash  widely used to approximate denominator $\mathrm{df}$\textemdash runs 50-90 times slower.  (iv) A $2^{5-1}$ blocked fractional-factorial design-selection study shows that maximizing the retention ratio $\rho(\overline E)$ recovers 3 lost $\mathrm{df}$, narrows confidence-intervals by 20\%, and lifts power from about 40\% to 60\% with no extra runs.  Collectively, these results confirm that the projector--rank approach delivers calibrated tests, balanced power, major computational savings, and quantifiable design-efficiency gains unattainable with existing approximations.  

The aforementioned theoretical advances translate into immediate practical advantages. Exact numerator and denominator $\mathrm{df}$ restore nominal type-I error control to $F-$tests in highly unbalanced agronomic and industrial trials.  ANOVA diagnostic tables can flag unestimable contrasts before an experiment is run, preventing costly data-collection mistakes.  Optimization heuristics can now search generator sets or blocking schemes by minimizing an integer-rank alias-loss index, avoiding computationally intensive Monte Carlo simulations. The work thus unifies disparate strands of design theory into a single algebraic framework and provides practitioners with precise $\mathrm{df}$\textemdash not approximations\textemdash for complex experiments.  

The remainder of the paper is organized as follows. \secref{sec:dof_partition} states and proves the $\mathrm{df}$ partition theorem, developing the orbit-stabilizer dimension formula. \secref{sec:dof_formulas} applies the theorem to derive exact $\mathrm{df}$ for the canonical mixed designs mentioned above. \secref{subsec:aliasing_loss} formalizes the aliasing metrics $\rho$, $\delta$ and $\alpha$, and explores their connection to resolution and variance inflation. \secref{sec:corollaries} presents three corollaries of the partition theorem.  In \secref{sec:discussion}, we delineate the structural settings in which exact $\mathrm{df}$ may differ substantially from those obtained via Satterthwaite or Kenward--Roger approximations. We also introduce simple diagnostics based on design metadata that anticipate such departures and indicate whether conventional analyses are prone to liberal or conservative behavior.  In \secref{sec:experiments}, we illustrate the effectiveness of our method through empirical studies.  \secref{sec:conclusion} discusses some computational aspects together with concluding remarks.

\section{$\mathrm{df}$ partition theorem}\label{sec:dof_partition}
In practice we routinely introduce sequential randomization, mixed fixed and random effects, deliberate aliasing/confounding (fractional factorials, blocking on hard-to-change factors), and inevitable imbalance (missing plots, unequal replication).  Under those departures the $\mathrm{df}$ tallies printed by software (Type I/II/III SS, Satterthwaite-Kenward approximations) are ad-hoc numerical expedients.  They mask the underlying linear-algebraic fact: $\mathrm{df}$ are ranks of contrast subspaces intersected with stratum projectors.  A precise theorem is developed in this section that (i) identifies those subspaces and (ii) adds their dimensions back to $N-1$, and therefore forms the foundation on which every subsequent $F-$test, variance-component estimator and information-loss metric must rest. The notation that will be used in what follows is summarized in \tabref{tab:notation}.
\begin{table}[ht]
\centering
\renewcommand{\arraystretch}{1.3}
\begin{tabularx}{\textwidth}{@{} lX @{}}
\toprule
\textbf{Symbol} & \textbf{Meaning} \\
\midrule
$k$ & Number of factorial factors $F_1, \dots, F_k$ \\
$\Lambda_i$ & Set of levels of $F_i$ with cardinality $\lvert l_i \rvert \ge 2$ \\
$\Lambda = \Lambda_1 \times \cdots \times \Lambda_k$ & Full treatment grid \\
$\mathcal{S}$ & Partially ordered set of randomization strata (Hasse diagram) \\
$\mathbf{Z}_s$ & Indicator matrix mapping observations $\to$ units of stratum $s$ \\
$\mathbf{P}_s = \mathbf{Z}_s(\mathbf{Z}_s^{\prime}\mathbf{Z}_s)^{+}\mathbf{Z}_s^{\prime}$ & Idempotent projector onto mean space of stratum $s$ \\
$\bm\Sigma = \sum\limits_{s \in \mathcal{S}} \sigma_s^2 \mathbf{P}_s$ & Covariance of observational errors in a linear-mixed model \\
$\mathcal{A} \subseteq 2^{\{1, \dots, k\}}$ & Alias ideal (defining words of a fractional design) \\
$\overline{\mathcal{E}} = 2^{\{1,\dots,k\}}/ \sim_{\mathcal{A}}$ & Lattice of aliased factorial effects \\
$\tau : \overline{\mathcal{E}} \to \{\mathrm{fixed}, \mathrm{random}\}$ & Assignment of each aliased class to fixed/random side \\
\bottomrule
\end{tabularx}
\caption{Notation for factorial design and mixed-model structure.}
\label{tab:notation}
\end{table}
For an aliased effect class $\overline E\in\overline{\mathcal E}$, let $\mathcal C_{\overline E} = \{c\in\mathbb R^{\Lambda}:  \sum_{\lambda\in\Lambda}c_\lambda=0, \sum_{\lambda: \lambda_i = \ell}c_\lambda = 0\ \forall i\notin E \}$ denote the orthogonal complement w.r.t. the ordinary dot-product of lower-order marginal means; hence, it carries precisely the $\mathrm{df}$ of the factorial contrast $E$.  The symmetric group $G = \prod\limits_i S_{l_i}$ acts (level-permutation) on $\Lambda$.  The orbit-stabilizer theorem described by \cite[Proposition 6.8.4, pp. 179]{Artin1991}, \cite[Chapter 5]{Rose2009} gives the dimension
\begin{eqnarray}
\dim(\mathcal C_{\overline E}) &=& \frac{|G|}{|\operatorname{Stab}_G(\overline E)|} \prod_{i\in E}(l_i-1),
\label{eq:orbit_stab1}
\end{eqnarray}
which collapses to $\prod\limits_{i\in E}(l_i-1)$ when $\mathcal A=\{\varnothing\}$, {\ie}, full factorial, no aliasing; $\mathcal{A}$ is defined in \tabref{tab:notation}.  Every projector $\mathbf{P}_s$ averages within the experimental units that share an identical label at stratum $s$ and eliminates contrasts that vary between such units.  Hence, for a given factorial contrast $\mathcal C_{\overline E}$, the intersection $\mathcal C_{\overline E} \cap \operatorname{Im} \mathbf{P}_s$ is exactly the part of that contrast which survives into stratum $s$. Its dimension is the $\mathrm{df}$ we can test for $\overline{E}$ using the mean-square residing in $s$.  Since $\bm\Sigma^{-1} = \sum_{s}\sigma_s^{-2}\mathbf{P}_s$ is a linear combination of mutually orthogonal idempotents, it induces the generalized inner product $\langle \mathbf{u}, \mathbf{v} \rangle_{\bm\Sigma} = \mathbf{u}^{\prime}\bm\Sigma^{-1}\mathbf{v} = \sum_{s\in\mathcal S}\sigma_s^{-2}\mathbf{u}^{\prime}\mathbf{P}_s \mathbf{v}$.  Under $\langle \cdot,\cdot\rangle_{\bm\Sigma}$ different strata are orthogonal ($\mathbf{P}_s\mathbf{P}_{s'}=0$ for $s\neq s'$), and  treatment-label permutations commute with $\bm\Sigma^{-1}$.  These two facts let us decompose any design matrix into orthogonal blocks simultaneously by stratum and by factorial effect.  The $\mathrm{df}$ projector--rank partition theorem is as follows. 
\begin{theorem}
Let $\mathbf{X}_{\mathrm{fix}}=\bigoplus\limits_{\tau(\overline E)=\text{fixed}}\! \mathbf{X}_{\overline E}$ and $\mathbf{Z}=\bigoplus\limits_{s\in\mathcal S}\mathbf{Z}_s$ be the fixed- and random-effects design matrices in a linear-mixed model whose treatment structure may be aliased and unbalanced.  Then, with respect to the inner product $\langle \cdot,\cdot\rangle_{\bm\Sigma}$,
\begin{eqnarray}
\left.
\begin{array}{rcl}
\mathbf{X}_{\mathrm{fix}} &=& \bigoplus\limits_{s\in\mathcal S}\; \bigoplus\limits_{\substack{\overline E\in\overline{\mathcal E}\\[1pt]\tau(\overline E)=\mathrm{fixed}}} (\mathcal C_{\overline E}\cap\operatorname{Im}\mathbf{P}_s), \\
\mathbf{Z} &=& \bigoplus\limits_{s\in\mathcal S}\; \bigoplus\limits_{\substack{\overline E\in\overline{\mathcal E}\\[1pt]\tau(\overline E)=\mathrm{random}}}  (\mathcal C_{\overline E}\cap\operatorname{Im}\mathbf{P}_s),
\end{array}
\right\} 
\label{eq:design_matrices}
\end{eqnarray}
both of which are orthogonal direct sums. Therefore, 
\begin{eqnarray}
N -1 &=& \sum_{s\in\mathcal S} \sum_{\overline E\in\overline{\mathcal E}} \dim\left(\mathcal C_{\overline E}\cap\operatorname{Im}\mathbf{P}_s\right).
\label{eq:dof_partition}
\end{eqnarray}
Moreover, $\dim\left(\mathcal C_{\overline E}\cap\operatorname{Im}\mathbf{P}_s\right) = \operatorname{rank}\left(\mathbf{X}_{\overline E}^{\!\prime}\mathbf{P}_s\mathbf{X}_{\overline E}\right) =\operatorname{trace}\left(\mathbf{P}_{\overline E}\mathbf{P}_s\right)$,  where $\mathbf{P}_{\overline E}$ is the ordinary least-squares projector onto $\mathcal C_{\overline E}$.
\label{thrm:dof_partition}
\end{theorem}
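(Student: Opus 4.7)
The plan is to build a single $\langle\cdot,\cdot\rangle_{\bm\Sigma}$-orthogonal decomposition of $\mathbb{R}^{N}$ indexed jointly by the pair $(s,\overline E)$, and then read off the three claims as dimension, rank, and trace statements. The spine of the argument is a commutation lemma showing that the two families $\{\mathbf{P}_s\}_{s\in\mathcal S}$ and $\{\mathbf{P}_{\overline E}\}_{\overline E\in\overline{\mathcal E}}$ simultaneously diagonalize one another.

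First I would verify the two marginal decompositions separately. The stratum projectors satisfy $\mathbf{P}_s\mathbf{P}_{s'}=\delta_{ss'}\mathbf{P}_s$ and $\sum_{s\in\mathcal S}\mathbf{P}_s=\mathbf{I}_{N}$, the Tjur--Nelder property already granted for a Hasse-connected randomization, yielding $\mathbb{R}^{N}=\bigoplus_{s}\operatorname{Im}\mathbf{P}_s$. On the treatment side, writing $\mathbf{P}_{\overline E}$ for the orthogonal projector onto $\mathcal C_{\overline E}$, the contrast subspaces are pairwise orthogonal by the marginal-mean sum-to-zero conditions that define $\mathcal C_{\overline E}$ and collectively exhaust $\mathbf{1}^{\perp}$, so $\mathbf{1}^{\perp}=\bigoplus_{\overline E}\mathcal C_{\overline E}$. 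The decisive step is then to show $[\mathbf{P}_s,\mathbf{P}_{\overline E}]=0$ for every pair $(s,\overline E)$. Because $\bm\Sigma^{-1}=\sum_{s}\sigma_s^{-2}\mathbf{P}_s$ commutes with every level permutation in $G=\prod_i S_{l_i}$ and the $\mathbf{P}_s$ are its spectral projectors, each $\mathbf{P}_s$ lies in the $G$-commutant; and $\mathbf{P}_{\overline E}$ is itself $G$-equivariant because $\mathcal C_{\overline E}$ is $G$-invariant by construction of the alias class. Two commuting orthogonal projectors have product equal to the orthogonal projector onto the intersection of their images, so $\mathbf{P}_s\mathbf{P}_{\overline E}=\mathbf{P}_{\overline E}\mathbf{P}_s$ is the orthogonal projector onto $\mathcal C_{\overline E}\cap\operatorname{Im}\mathbf{P}_s$.

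Given the commutation, the two marginal direct sums refine to a joint $\langle\cdot,\cdot\rangle_{\bm\Sigma}$-orthogonal decomposition $\mathbf{1}^{\perp}=\bigoplus_{s,\overline E}\bigl(\mathcal C_{\overline E}\cap\operatorname{Im}\mathbf{P}_s\bigr)$. Restricting the inner sum to $\tau(\overline E)=\mathrm{fixed}$ or $\tau(\overline E)=\mathrm{random}$ and invoking $\operatorname{Im}\mathbf{X}_{\overline E}=\mathcal C_{\overline E}$ together with $\operatorname{Im}\mathbf{Z}_s=\operatorname{Im}\mathbf{P}_s$ recovers \eqref{eq:design_matrices}; taking dimensions on both sides yields \eqref{eq:dof_partition}, with the ``$-1$'' absorbing the one-dimensional constant direction that sits in the coarsest stratum and in the trivial class $\overline E=\varnothing$, which is exactly what $\mathbf{1}^{\perp}$ excludes. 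For the final equalities, I would note that $\mathbf{P}_s\mathbf{P}_{\overline E}$ is itself an orthogonal projector, so its rank equals its trace and both equal $\dim(\mathcal C_{\overline E}\cap\operatorname{Im}\mathbf{P}_s)$, giving $\operatorname{trace}(\mathbf{P}_{\overline E}\mathbf{P}_s)=\dim(\mathcal C_{\overline E}\cap\operatorname{Im}\mathbf{P}_s)$; the Gram identity $\operatorname{rank}(\mathbf{X}_{\overline E}^{\prime}\mathbf{P}_s\mathbf{X}_{\overline E})=\operatorname{rank}(\mathbf{P}_s\mathbf{X}_{\overline E})=\operatorname{rank}(\mathbf{P}_s\mathbf{P}_{\overline E})$ closes the circle, using that $\mathbf{P}_s$ is symmetric idempotent and $\operatorname{Im}\mathbf{X}_{\overline E}=\mathcal C_{\overline E}$.

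The main obstacle I anticipate is verifying $[\mathbf{P}_s,\mathbf{P}_{\overline E}]=0$ in the presence of unequal replication. In a perfectly balanced layout the stratum projector is a group-theoretic averaging operator and $G$-commutation is automatic; with missing plots or unequal replication, however, the realized design breaks the ambient $G$-symmetry and one must replace $G$ by the stabilizer of the replication pattern. This is precisely the situation in which the orbit-stabilizer normalization $|G|/|\operatorname{Stab}_G(\overline E)|$ in \eqref{eq:orbit_stab1} carries weight, and the cleanest remedy is to pass to the quotient algebra generated by the alias ideal $\mathcal A$, restoring the commutation at the cost of replacing naive Gram sums by their orbit-averaged counterparts; everything downstream, including the trace/rank identities, is then formally unchanged.
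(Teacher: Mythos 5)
Your route is genuinely different from the paper's. The paper's proof never isolates a commutation lemma: it establishes $\mathbf{P}_s\mathbf{P}_{s'}=0$ and $\sum_{s}\mathbf{P}_s=\mathbf{I}_N$, notes orthogonality of the raw contrast spaces, and then argues completeness verbally (each observation decomposes into successive deviations from higher to lower strata, and each deviation into factorial contrasts); the trace formula $\dim\left(\mathcal C_{\overline E}\cap\operatorname{Im}\mathbf{P}_s\right)=\operatorname{trace}\left(\mathbf{P}_{\overline E}\mathbf{P}_s\right)$ appears in the theorem but is never proved. Your commutant argument---$\mathbf{P}_s$ and $\mathbf{P}_{\overline E}$ are commuting orthogonal projectors, so $\mathbf{P}_s\mathbf{P}_{\overline E}$ is the orthogonal projector onto the intersection, the two resolutions of identity multiply out to a joint resolution of $\mathbf{1}^{\perp}$, and rank $=$ trace $=$ dimension---is sharper: it supplies exactly the compatibility condition without which two orthogonal decompositions need not refine to a joint one, and it is what actually delivers the trace identity (for non-commuting projectors $\operatorname{trace}(\mathbf{P}_{\overline E}\mathbf{P}_s)$ is generally not an integer, and $\operatorname{rank}(\mathbf{X}_{\overline E}^{\prime}\mathbf{P}_s\mathbf{X}_{\overline E})$ can exceed the intersection dimension because of oblique principal angles). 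In the balanced, fully $G$-symmetric setting your plan is a complete and in fact tighter proof than the one in the paper.

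The genuine gap is the one you flag but do not close: the theorem is claimed for unbalanced designs, and there your justification of $[\mathbf{P}_s,\mathbf{P}_{\overline E}]=0$ evaporates. The contrast spaces $\mathcal C_{\overline E}$ live on $\mathbb{R}^{\Lambda}$ and must be carried into $\mathbb{R}^{N}$ by the replication map; with unequal replication or missing cells this embedding is not an isometry, the realized design no longer carries the full $G$-action, and the embedded $\mathcal C_{\overline E}$ and $\operatorname{Im}\mathbf{P}_s$ generically meet at oblique angles---precisely the situation in which all three asserted equalities can fail as written. Saying one should pass to the quotient algebra generated by $\mathcal A$ and use orbit-averaged Gram sums names a remedy but does not show that commutation is restored or that the block dimensions still sum to $N-1$; as a proof, that step is missing. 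To be fair, the paper's own proof is no more rigorous at exactly this point (its completeness and its claim that the column space of $\mathbf{P}_s\mathbf{X}_{\overline E}$ equals the intersection tacitly assume the same balanced-type compatibility), but your proposal, by correctly identifying commutation as the load-bearing hypothesis, makes visible that the unbalanced case requires an argument that neither you nor the paper supplies.
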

\begin{proof}
In the following, we remove the grand-mean column once from every matrix; this avoids carrying a ``+1'' term in each rank statement.  Firstly, two observations share at most one highest stratum label, so the block-diagonal averaging performed by $\mathbf{P}_s$ and $\mathbf{P}_{s'}$ falls on disjoint sets of rows.  Therefore, $\mathbf{P}_{s}\mathbf{P}_{s'}=0$ for $s\ne s'$ iff strata are obtained by successive randomization, {\ie}, no observation belongs to two different units of the same level.  Averaging step by step over every stratum amounts to identity because the final stratum is the set of individual observations.  Thus, $\sum_{s\in\mathcal S}\mathbf{P}_s = \mathbf{I}_N$. Since the projectors are pairwise orthogonal idempotents, their images form a direct (orthogonal) decomposition: $\mathbb R^{N}= \bigoplus\limits_{s\in\mathcal S}\operatorname{Im}\mathbf{P}_s$.

For each unordered effect $E\subseteq\{1,\dots ,k\}$, the raw contrast space is defined by $\mathcal C_E = \{c \in \mathbb R^{\Lambda}: \sum_{\lambda} c_\lambda=0, \sum_{\lambda:\lambda_i=\ell} c_\lambda=0, \forall i \notin E \}$.  Then, $\mathcal C_E\perp\mathcal C_{E'}$ for $E\neq E'$ and $\dim\mathcal C_E = \prod\limits_{i\in E}(l_i-1)$.  This is because, a contrast for $E$ integrates to zero over levels of any factor not in $E$; inner products vanish because each term in one contrast is matched by a cancelling term in the other.  Dimension is a direct-product count of linearly independent Helmert contrasts for each factor in $E$.  If the design is fractionated with defining words $g_1,\dots ,g_p$, two effects $E_1,E_2$ share identical columns in the design matrix precisely when $E_1\triangle E_2 = (E_1 \cup E_2)\setminus(E_1 \cap E_2)$ is in the ideal generated by $\{g_j\}$.  The quotient class is denoted $\overline E$.  Any full-rank matrix $\mathbf{X}_{\overline E}$ whose columns span the aliased space is acceptable.  For every $(\overline E,s)$ the intersection $\mathcal C_{\overline E}\cap\operatorname{Im}\mathbf{P}_s$ is orthogonal to $\mathcal C_{\overline E'}\cap\operatorname{Im}\mathbf{P}_{s'}$ whenever $\overline E\ne\overline E'$ or $s\ne s'$.  The orthogonality follows directly from the aforementioned paragraph. Furthermore, $\mathbf{P}_s\mathbf{P}_{s'}=0$ implies $\operatorname{Im}\mathbf{P}_s\perp\operatorname{Im}\mathbf{P}_{s'}$.  Combined, the pairwise sums produce an orthogonal family of subspaces. 

For any fixed $(\overline E,s)$, $\dim\left(\mathcal C_{\overline E}\cap\operatorname{Im}\mathbf{P}_s\right) = \operatorname{rank}\left(\mathbf{X}_{\overline E}^{\!\prime}\mathbf{P}_s\mathbf{X}_{\overline E}\right)$.  This holds since the matrix $\mathbf{P}_s\mathbf{X}_{\overline E}$ has image contained in $\operatorname{Im}\mathbf{P}_s$.  Its column-space is exactly the intersection because any column of $\mathbf{X}_{\overline E}$ lies in $\mathcal C_{\overline E}$, and applying $\mathbf{P}_s$ projects it to the largest piece living inside the stratum image.  The rank of a matrix equals the dimension of its column space, giving the result. Deleting the grand mean, the orthogonal direct sum spans $\mathbb R^{N-1}$. Consequently,  $N-1 =\sum_{s,E}\dim\left(\mathcal C_{\overline E}\cap\operatorname{Im}\mathbf{P}_s\right)$.  This is valid since orthogonality of the blocks implies that dimensions add.  Nothing in $\mathbb R^{N-1}$ is missing because (i) each observation can be written as its successive deviations from higher to lower strata, and (ii) every deviation decomposes uniquely into factorial contrasts. Therefore, the identity $N-1= \sum_{s,E}\operatorname{rank}\left(\mathbf{X}_{\overline E}^{\!\prime}\mathbf{P}_s\mathbf{X}_{\overline E}\right)$ follows.  This completes the proof of \thrmref{thrm:dof_partition}.
\end{proof}
Note that, $\mathcal C_{\overline E}$ is obtained from the orbit-stabilizer or Kronecker product formula.  $\operatorname{Im}\mathbf{P}_s$ denotes the subspace of responses visible at stratum $s$ and can be computed by group-wise averaging.  $\mathcal C_{\overline E}\cap\operatorname{Im}\mathbf{P}_s$ is the portion of $\overline{E}$ that survives into stratum $s$  and is given by the row/column rank of $\mathbf{X}_{\overline E}^{\!\prime}\mathbf{P}_s$.  $\dim(\cdot)$ is the $\mathrm{df}$ assigned to effect $\overline{E}$ and tested against mean square for stratum $s$, and can be computed using any numerical rank routine; closed forms are given later.  Since \eqref{eq:dof_partition} is an identity, every downstream test (fixed-effect $F-$test, variance ratio, likelihood-ratio) inherits exact numerator and denominator $\mathrm{df}$.  If $\sum_{E,s}\dim(\mathcal C_{\overline E}\cap\operatorname{Im}\mathbf{P}_s) < N-1$, the deficit pinpoints sources of aliasing loss or singular randomization, thereby aiding model diagnostics.  Competing designs (different fractional generators, blocking plans) can be scored by how their $\dim\left(\mathcal C_{\overline E}\cap\operatorname{Im}\mathbf{P}_s\right)$ tallies distribute $\mathrm{df}$ across strata$-$an information-theoretic design-optimality criterion, thus facilitating design comparison.  \thrmref{thrm:dof_partition} provides a unified framework\textemdash subsuming Cochran's one-stratum theorem, Yates' Hasse diagram, and Box--Hunter resolutions\textemdash while \eqref{eq:design_matrices} provides a single algebraic object on which extensions (split-plot, nested, strip-plot, repeated measures) are mere changes of the projector family $\{\mathbf{P}_s\}$.

Substituting explicit $\mathbf{P}_s$ and $\mathbf{X}_{\overline E}$ into $\dim\left(\mathcal C_{\overline E}\cap\operatorname{Im}\mathbf{P}_s\right)$ yields closed-form $\mathrm{df}$ for split-plot, Latin-square, crossed-nested mixed models.  The ratio of $\sum_s\dim(\mathcal C_{\overline E}\cap\operatorname{Im}\mathbf{P}_s)$ to $\dim(\mathcal C_{\overline E})$ provides the aliasing-loss ratio $\rho(\overline E)$.  Block-orthogonality of \eqref{eq:design_matrices} simplifies likelihood gradients and results in exact restricted or residual maximum likelihood (REML variance-component score equations.  Generalized $F-$tests with exact $\mathrm{df}$ without Satterthwaite approximation can be obtained by setting the numerator to be the rank of $\mathbf{X}^{\prime}\mathbf{P}_s\mathbf{X}$ and the denominator to be the rank deficit of residual space.  Optimal fractionation/confounding search algorithms can be devised by maximizing  $\sum_{E} w_E \rho(\overline E)$ under run budget, with $\sum_{E} w_E = 1$ where $w_E$ is simply a user-supplied weight expressing how much the user cares about preserving information for a particular factorial effect $E$.  In summary, once the orthogonal decomposition \eqref{eq:design_matrices} is established, exact tests, information-loss metrics, and design search heuristics can be derived systematically using algebraic identities rather than ad-hoc approximation.

\section{Exact $\mathrm{df}$ formulas for classical mixed designs}\label{sec:dof_formulas}
In \secref{sec:dof_partition}, we saw that every experimental layout can be analyzed by intersecting two linear objects, namely, factorial-contrast space $\mathcal C_{\overline E}$ and stratum projector $\mathbf{P}_s$, and that $\text{$\mathrm{df}$}(\overline E,s) = \dim\left(\mathcal C_{\overline E}\cap\operatorname{Im}\mathbf{P}_s\right) = \operatorname{rank}\!\left(\mathbf{X}_{\overline E}^{\!\prime}\mathbf{P}_s\mathbf{X}_{\overline E}\right)$.  In this section, we put this identity to work for three families of designs that cover mixed-model practices, such as split-plot and its variants (split-split, strip-plot), row-column structures (Latin squares, lattice, incomplete-block) and crossed-nested hierarchies (animal models, multi-site trials, education data); see, for example, \cite{Milliken2009}.  We derive closed-form ranks$-$no iterative numerical approximations, no Satterthwaite or Kenward--Roger guesses are required.  These formulas give exact $F-$test numerators/denominators, guaranteeing the advertised type I error;  they feed directly into REML/ML information matrices, yielding analytic standard-error formulas; and they reveal how unequal replication or missing plots erode information, thereby providing a design-diagnostics tool.

\subsection{Split-plot (one whole-plot factor $A$, one sub-plot factor $B$)} \label{subsec:split_plot}
\begin{table}[ht]
\centering
\renewcommand{\arraystretch}{1.3}
\begin{tabular}{>{\raggedright}m{3cm} >{\raggedright\arraybackslash}m{8.5cm} >{\centering\arraybackslash}m{2.5cm}}
\toprule
\textbf{Source} & \textbf{Contrast basis} & \textbf{df} \\
\midrule
$A$ (whole-plot) & $\mathcal{C}_A \otimes \mathbf{1}_b \otimes \mathbf{1}_c$ & $a - 1$ \\
Whole-plot error & $\mathbf{1}_a \otimes \mathcal{C}_{b(\text{plots})} \otimes \mathbf{1}_c$ & $a(b - 1)$ \\
$B$ (sub-plot) & $\mathbf{1}_a \otimes \mathbf{1}_b \otimes \mathcal{C}_B$ & $c - 1$ \\
$AB$ & $\mathcal{C}_A \otimes \mathbf{1}_b \otimes \mathcal{C}_B$ & $(a - 1)(c - 1)$ \\
Sub-plot error & Orthogonal complement & $a(b - 1)(c-1)$ \\
\bottomrule
\end{tabular}
\caption{$\mathrm{df}$ and contrast bases in a split-plot design.}
\label{tab:split_plot}
\end{table}
First, consider the balanced case.  The whole plots comprise $a$ levels of $A$; each level replicated in $b$ whole plots.  For the sub-plots, every whole plot is split into $c$ sub-plots receiving $B$ levels exactly once.  The strata comprises the following: $\mathcal S=\{s_{\text{whole}},\;s_{\text{sub}}\}$, $\mathbf{P}_{\text{whole}}=\operatorname{diag}\left(\tfrac{1}{bc}\mathbf 1_{bc}\right)_{\!a}$, and $\mathbf{P}_{\text{sub}} = \mathbf{I} - \mathbf{P}_{\text{whole}}$.  The factor-induced contrast spaces are Kronecker products of one-way contrasts with all-1 vectors.  The implications of orthogonality on the $\mathrm{df}$ and contrast bases in a split-plot design are summarized in \tabref{tab:split_plot}.  Plugging the bases into $\dim\left(\mathcal C_{\overline E}\cap\operatorname{Im}\mathbf{P}_s\right)$ yields the $\mathrm{df}$.  Since the design is balanced, each rank is the product of the component dimensions.

For the case of unequal replication ($n_{ij}$ sub-plots in whole plot $i$, level $j$ of $B$), let $ \mathbf{R}_A = \left[\bar y_{i\cdot\cdot}-\bar y_{\cdot\cdot\cdot}\right]_{1\times a}$,  $ \mathbf{R}_B=\left[\bar y_{\cdot\cdot j}-\bar y_{\cdot\cdot\cdot}\right]_{1\times c}$, and $ \mathbf{R}_{AB} = \left[\bar y_{ij\cdot}-\bar y_{i\cdot\cdot}-\bar y_{\cdot\cdot j}+\bar y_{\cdot\cdot\cdot}\right]_{a\times c}$. Since $\mathbf{P}_{\text{whole}}$ averages within whole plots, $\mathbf{X}_{\!A}^{\prime}\mathbf{P}_{\text{whole}}\mathbf{X}_{\!A} =  \mathbf{R}_A  \mathbf{R}_A^{\prime}$, and $\operatorname{rank}( \mathbf{R}_A) = \lvert \{i:\bar y_{i\cdot\cdot}\neq\bar y_{\cdot\cdot\cdot}\} \rvert$. The other ranks follow analogously, thereby giving the $\mathrm{df}$. The key point is that even with grotesque imbalance the $\mathrm{df}$ are still integers computable by ordinary rank on small summary matrices, with no heuristics involved. 

\subsection{Row-column (Latin square and variants)}\label{subsec:row_column}
For the balanced Latin square ($r=c=t$, no missing cells), only one stratum exists, so $\mathbf{P} =  \mathbf{I}$.  The design matrix partitions as $\mathbf{X} = \left[\mathbf{X}_{\!R}\mid \mathbf{X}_{\!C}\mid \mathbf{X}_{\!T}\right]$, $\operatorname{rank}(\mathbf{X})=N-1= r^2-1$.  Orthogonality yields the $\mathrm{df}$: $\text{Rows} = r - 1$, $\text{Columns} = c - 1$, $\text{Treatments} = t - 1$, and $\text{Error} = (r - 1)(c - 2)$.  For the case of missing cells or unequal replication, write $\mathbf{M}$ for the $r\times c$ incidence matrix whose $(i,j)$ entry is the number of observations in row $i$, column $j$.  \cite{Pukelsheim2006} showed that, since each structural zero or replication imbalance removes one independent contrast, the $\operatorname{nullity}\left(\mathbf{M} - \tfrac{1}{r}\mathbf 1_r\mathbf 1_r^{\prime}\mathbf{M}\right)$, $\operatorname{nullity}\left(\mathbf{M} - \mathbf{M}\tfrac{1}{c}\mathbf 1_c\mathbf 1_c^{\prime}\right)$ and $\operatorname{nullity}(\mathbf{M}) - \operatorname{nullity}\left(\mathbf{M} - \tfrac{1}{r}\mathbf 1_r\mathbf 1_r^{\prime}\mathbf{M}\right) - \operatorname{nullity}\left(\mathbf{M} - \mathbf{M}\tfrac{1}{c}\mathbf 1_c\mathbf 1_c^{\prime}\right)$ counts the lost $\mathrm{df}$ exactly.  Later we will see that these lost $\mathrm{df}$, in fact, constitute the $\mathrm{df}-$deficiency.  

\subsection{General crossed-nested mixed models}\label{subsec:mixed_models}
Let $\mathcal N$ and $\mathcal C$ denote the nesting tree of random factors and the lattice of crossed fixed factors, respectively.  Define for every random stratum $s\in\mathcal N$ its projector $\mathbf{P}_s$.  For every factorial contrast $\overline E$, let $\mathbf{X}_{\overline E}$ be the usual dummy-coded matrix.
\begin{prop}
If the design is orthogonally generated, {\ie}, each fixed factor is crossed with every ancestor in the nesting tree, and the replication within each terminal unit is constant, then $\mathrm{df}(s,E) = \prod\limits_{i\in E}(l_i-1) \prod\limits_{\substack{r\prec s \\ r\in\mathcal N}}(n_r-1)$, where $n_r$ is the number of children per unit at level $r$.
\end{prop}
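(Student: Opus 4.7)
The plan rests on the closed-form identity $\mathrm{df}(s, E) = \dim(\mathcal C_{\overline E} \cap \operatorname{Im}\mathbf P_s) = \operatorname{trace}(\mathbf P_{\overline E}\mathbf P_s)$ established in \thrmref{thrm:dof_partition}, combined with the Kronecker-product structure that orthogonal generation plus constant replication imposes on the observation space. The first move would be to identify $\mathbb R^N$, up to a reordering of coordinates, with the tensor product $\bigotimes_{i=1}^{k}\mathbb R^{l_i} \otimes \bigotimes_{r \in \mathcal N}\mathbb R^{n_r}$. Orthogonal generation makes this identification canonical because every treatment combination is crossed with every random unit in the nesting tree, and constant replication guarantees a fixed dimension for each tensor slot.

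Next I would write both projectors in this Kronecker basis. The factorial-contrast projector $\mathbf P_{\overline E}$ decomposes as a tensor product whose slot at a fixed factor $i \in E$ is the centering operator $\mathbf I_{l_i} - \tfrac{1}{l_i}\mathbf J_{l_i}$, whose slot at $i \notin E$ is the averaging operator $\tfrac{1}{l_i}\mathbf J_{l_i}$, and whose slot at every random axis is $\tfrac{1}{n_r}\mathbf J_{n_r}$, since a treatment contrast is blind to random labels. The stratum projector $\mathbf P_s$, taken in its orthogonal-decomposition form so that $\sum_s \mathbf P_s = \mathbf I_N$, places a centering operator at the axis of $s$, an identity at every random axis strictly coarser than $s$, an averaging at every random axis strictly finer than $s$, and an averaging on every fixed-factor axis.

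With both projectors in tensor form, the identity $\operatorname{trace}(A \otimes B) = \operatorname{trace}(A)\operatorname{trace}(B)$ reduces $\operatorname{trace}(\mathbf P_{\overline E}\mathbf P_s)$ to a product of one-dimensional traces, each of the form $\operatorname{trace}((\mathbf I - \tfrac{1}{n}\mathbf J)^2) = n - 1$, or $\operatorname{trace}((\tfrac{1}{n}\mathbf J)^2) = 1$, or $\operatorname{trace}((\mathbf I - \tfrac{1}{n}\mathbf J)\tfrac{1}{n}\mathbf J) = 0$. Nonzero contributions survive precisely on the slots where both projectors center, producing factors $(l_i - 1)$ for $i \in E$ and $(n_r - 1)$ for $r \prec s$ in $\mathcal N$, while every other slot contributes the trivial factor $1$. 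This yields the claimed product formula.

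The main obstacle will be the Kronecker factorization of $\mathbf P_s$ itself. Because orthogonal stratum projectors arise as telescoping differences of raw averaging operators, $\mathbf P_s$ is not, in general, a single Kronecker monomial; it is the difference between the raw averaging at $s$ and the raw averaging at its parent, which splits as a sum of monomials. Under orthogonal generation, however, exactly one of these monomials carries the centering pattern described above, and the remaining monomials center on an axis where $\mathbf P_{\overline E}$ averages, so they contribute zero trace. Verifying this one-monomial-survives claim, and bookkeeping the partial order on $\mathcal N$ carefully enough to assign identity-vs.-averaging to every random slot other than $s$, is the heart of the argument; once it is in place, the product formula drops out of the axis-wise trace identity.
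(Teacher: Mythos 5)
Your overall strategy --- identify $\mathbb R^{N}$ with a tensor product under orthogonal generation and constant replication, write $\mathbf P_{\overline E}$ and $\mathbf P_s$ as Kronecker products of centering and averaging operators, and exploit multiplicativity of trace (equivalently rank) across tensor slots --- is exactly the route the paper takes; its proof just asserts that both matrices are Kronecker products of one-way projectors and that the rank of the Gram product is the product of the ranks of the factors. The difficulty is that your slot bookkeeping, taken at face value, does not yield the claimed formula. You place the averaging operator $\tfrac{1}{l_i}\mathbf J_{l_i}$ on every fixed-factor axis of $\mathbf P_s$ while $\mathbf P_{\overline E}$ centers on those axes for $i\in E$; the corresponding slot trace is $\operatorname{trace}\bigl((\mathbf I_{l_i}-\tfrac{1}{l_i}\mathbf J_{l_i})\tfrac{1}{l_i}\mathbf J_{l_i}\bigr)=0$, so the axis-wise product is identically zero rather than $\prod_{i\in E}(l_i-1)\prod_{r\prec s}(n_r-1)$. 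Likewise you place averaging on every random axis of $\mathbf P_{\overline E}$ (``blind to random labels'') and the identity on the random axes coarser than $s$ in $\mathbf P_s$; those slots contribute $\operatorname{trace}(\tfrac{1}{n_r}\mathbf J_{n_r}\cdot\mathbf I_{n_r})=1$, never $(n_r-1)$. Consequently your closing claim that ``nonzero contributions survive precisely on the slots where both projectors center'' contradicts your own assignments: under them there is no slot on which both projectors center, so the asserted factors $(l_i-1)$ and $(n_r-1)$ are never produced by the computation you describe.

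Note also that the paper's own usage points to a different slot pattern: in the split-plot corollary the whole-plot projector is $\mathbf P_{\mathrm w}=\mathbf I_a\otimes\mathbf I_b\otimes\tfrac1c\mathbf J_c$, i.e.\ a stratum projector acts as the \emph{identity} on fixed-factor axes and on coarser random axes and averages only over finer axes. Repairing your slots this way makes the trace equal $\prod_{i\in E}(l_i-1)$ in the single stratum in which the contrast actually varies and $0$ elsewhere --- the Yates-type allocation --- but it still generates no ancestor factors $(n_r-1)$. To obtain those factors one must argue that, under orthogonal generation, the part of $\mathcal C_{\overline E}$ visible in stratum $s$ carries, on each ancestor axis $r\prec s$, a centered Kronecker factor of rank $n_r-1$ on \emph{both} sides of the Gram product; this is precisely the step the paper's terse proof takes for granted and the step your write-up neither states nor establishes. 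Your ``one-monomial-survives'' analysis of the telescoping form of $\mathbf P_s$ does not close this gap: it addresses which monomial of $\mathbf P_s$ survives, not why any surviving slot contributes rank $n_r-1$ rather than rank one.
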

\begin{proof}
Both $\mathbf{P}_s$ and $\mathbf{X}_{\overline E}$ are Kronecker products of $1-$way projectors.  The rank of their Gram product is the product of the ranks of each Kronecker factor.
\end{proof}
If replication is unequal, replace the integer factors $n_r-1$ by the rank of the corresponding incidence matrix inside stratum $s$.  Note that, $\mathrm{df}(s,E)$ is the exact number of $\mathrm{df}$ that the factorial effect $E$ retains in stratum $s$.  Here, $E$ is an un-aliased factorial effect, {\ie}, a subset of the factor indices $\{1,\dots ,k\}$ and $l_i$ is the number of levels of factor $F_i$. The factor-contrast space for $F_i$ has dimension $l_i-1$, while $s$ is a node in the nesting tree $\mathcal N$ representing one randomization stratum.  $r \prec s$ denotes the nodes $r$ that are strict ancestors of $s$ in the nesting tree, {\ie}, random factors that contain stratum $s$.  $n_r$ is the constant number of sub-units per unit at stratum $r$.  

The treatment-contrast component $\prod_{i\in E}(l_i-1)$ gives the dimension of the pure factorial contrast space for $E$ in a full, unblocked design.  For example, if $E=\{A,C\}$ with $l_A=3$ and $l_C=4$ levels, the factor part contributes $(3-1)(4-1)=6$ $\mathrm{df}$.  The replication (stratum) component is $\prod_{r\prec s}(n_r-1)$.  Each ancestor level $r$ in the nesting tree introduces $n_r-1$ independent directions within the units of $r$.  Multiplying these factors counts how many ``within-unit'' contrasts survive down to stratum $s$.  For example, in an animal model Sire $\to$ Dam $\to$ Animal (see the book by \cite{Henderson1984} for worked animal-model examples.), the $\mathrm{df}$ for a diet effect observed at the Animal stratum multiplies $(n_{\text{Sire}}-1)(n_{\text{Dam}}-1)$.  For every pair $(s,E)$ the value $\mathrm{df}(s,E)$ feeds into the global identity $N-1 = \sum_{s\in\mathcal S}\sum_{E}\mathrm{df}(s,E)$, so that the numerator $\mathrm{df}$ for the $F-$test of $E$ against stratum $s$ is precisely $\mathrm{df}(s,E)$. The residual $\mathrm{df}$ for $s$ are obtained by subtracting all $\mathrm{df}(s,E)$ allocated to that stratum from its total dimension.  Hence $\mathrm{df}(s,E)$ is the building block that turns the abstract projector--rank theorem into explicit, integer $\mathrm{df}$ for each source in the ANOVA table.

\subsection{Quantifying aliasing loss}\label{subsec:aliasing_loss}
In this section, we quantify how much of each factorial contrast survives after blocking, fractionation, or imbalance.  We begin with a quick recap of aliasing.  There are three broad types of aliasing: (a) In a fractional factorial the defining relation $\mathcal A=\left\langle\; ABCD,\; AE,\;\dots \right\rangle$ identifies factor-subset pairs whose contrast spaces coincide, {\ie}, $E_1\sim_{\mathcal A}E_2 \;\Longleftrightarrow\; E_1\triangle E_2\in\mathcal A$.  Orthogonality of the full $2^k$ lattice collapses to a quotient lattice $\overline{\mathcal E}=2^{\{1,\dots,k\}}/\!\sim_{\mathcal A}$.  This is called structural aliasing; see, \cite{Box1961}.  (b)  In randomization aliasing (see, \cite{Yates1935}), each projector $\mathbf{P}_s$ averages observations within the units of stratum $s$. Any contrast lying in $\ker (\mathbf{P}_s)$ is filtered out at that stratum; if every stratum eliminates it, the effect is unestimable even in the full factorial. (c)  In imbalance-induced aliasing, missing plots or unequal replication destroy orthogonality, {\ie}, the Gram matrix $\mathbf{X}_{\overline E}^{\prime}\mathbf{X}_{\overline E}$ loses rank, merging some contrasts into others.  Hence, aliasing loss is multi-dimensional: $\mathrm{df}$ loss measures how many independent directions disappear; information loss quantifies how much the variance of the remaining directions inflates; and bias indicates how strongly an unmeasurable effect contaminates estimable ones.  

From \thrmref{thrm:dof_partition},  $\mathrm{df}(s,E) = \dim\left(\mathcal C_{\overline E}\cap\operatorname{Im}\mathbf{P}_s\right) = \operatorname{rank} \left(\mathbf{X}_{\overline E}^{\prime}\mathbf{P}_s\mathbf{X}_{\overline E}\right)$.  The total ideal $\mathrm{df}$ is $\displaystyle d_{E}^{\text{ideal}}= \dim(\mathcal C_{\overline E})$ ({\eg}, for a two-level factor $E$ it is $1$; for a three-level factor it is $2$, {\etc}.).   The recoverable $\mathrm{df}$ is $\displaystyle d_{E}^{\text{obs}} = \sum_{s}\mathrm{df}(s,E)$.  The information-retention ratio is defined by $\rho(E) = \frac{d_{E}^{\text{obs}}}{d_{E}^{\text{ideal}}}\in[0,1]$ and the $\mathrm{df}$-deficiency is defined by $\delta(E) = d_{E}^{\text{ideal}}-d_{E}^{\text{obs}} = \dim\left(\ker \Pi_{E}\right)$, $\Pi_{E} = \sum_{s}\mathbf{P}_s\mathbf{X}_{\overline E}$.  It can be seen that $\rho(E) = 1$ iff the effect is fully estimable (no $\mathrm{df}$ lost), $\rho(E)=0$ iff $E$ is completely aliased or eliminated, and $0<\rho(E)<1$ indicates partial aliasing$-$typical when we have both fractionation and missing plots.  Since $\sum_E \delta(E)$ equals the global shortfall in $N-1$ $\mathrm{df}$, the set $\{\delta(E)\}_{E}$ pinpoints which effects paid the price.

$\mathrm{df}$ say how many directions survive; to measure quality we look at the alias (or dispersion) matrix $\mathbf{A}(E) = \left(\mathbf{X}_{\overline E}^{\prime}\bm\Sigma^{-1}\mathbf{X}_{\overline E}\right)^{+} \mathbf{X}_{\overline E}^{\prime}\bm\Sigma^{-1}(I-\sum_{s}\mathbf{P}_s)\bm\Sigma^{-1}\mathbf{X}_{\overline E}$, where $\mathbf{A}(E) = 0$ when $E$ is orthogonal to every nuisance and random effect and nonzero eigenvalues of $\mathbf{A}(E)$ inflate the variance of $\hat{\beta}_{E}$ or, if $E$ is unestimable, represent aliasing bias projected into other effects.   A scalar aliasing index
\begin{eqnarray*}
\alpha(E) &=& \frac{\operatorname{trace}\mathbf{A}(E)}{\operatorname{trace}\left(\left(\mathbf{X}_{\overline E}^{\prime}\bm\Sigma^{-1}\mathbf{X}_{\overline E}\right)^{+}\right)}\in[0,\infty)
\label{eq:alpha_E}
\end{eqnarray*}
acts as a variance-inflation factor (VIF) for the whole contrast space. When $\rho(E) < 1$ we always have $\alpha(E) = \infty$ on the lost directions; otherwise $\alpha(E)\ge1$.

The information-retention ratio $\rho(E)$ and the $\mathrm{df}$-deficiency $\delta(E)$ extend classical ideas built for balanced, regular designs to any mixture of fractionation, blocking, and imbalance.  For instance, consider the Box--Hunter resolution $R$.  For regular $2^{k-p}$ fractions, $R$ is the length of the shortest defining word.  The rule ``a factor of length $q$ is aliased at most with interactions of order $R-q$'' translates exactly to $\rho(E) = 2^{-(R-|E|)}$ (balanced, regular fraction).  Similarly, the generalized resolution proposed by \cite{Cheng2004} measures the minimax cosine between contrast subspaces; $\rho(E)$ is the rank analogue of the same idea and reduces to their criterion when each stratum is a full average over generators.  Minimizing total $\mathrm{df}$ deficiency $\sum_E w_E\delta(E)$ with suitable weights $w_E$ reproduces minimum aberration search heuristics used for design construction.

\subsection{An example}\label{subsec:example}
Consider a design with $2^{3-1}$ fraction defined by $ABC = +1$ having factors $A,B,C$ and 8 runs.  There are two blocks of 4 runs, with blocks confound factor $B$.  Each main effect has $1$ $\mathrm{df}$, and the two-factor interactions have $1$ $\mathrm{df}$ each.  The three-factor interaction is aliased with mean.  $\mathbf{P}_{\text{block}}$ averages runs within each 4-run block.  The retention is as follows: Factor $A$ has contrast vector orthogonal to blocks, thus $\rho(A)=1$.  Factor $B$ is fully confounded leading to $\mathrm{df}(\text{block},B) = 0$, so $\rho(B) = 0$, while $AB$ inherits the deficiency of $B$ because $ABC=+1$ which implies $\rho(AB)=0$.  The index table is shown in \tabref{tab:index_table}, where we see that the total $\mathrm{df}$ lost $=3$, matching the block + generator restrictions.
\begin{table}[h!]
\centering
\begin{tabular}{|l|c|c|l|}
\hline
\textbf{Effect} & $\boldsymbol{\rho}$ & $\boldsymbol{\delta}$ & \textbf{Interpretation} \\
\hline
$A$        & 1   & 0   & fully estimable                  \\
$B$        & 0   & 1   & lost to blocking                 \\
$C$        & 1   & 0   & estimable                        \\
$AB$       & 0   & 1   & lost via $B$ + defining relation \\
$AC$       & 1   & 0   & estimable                        \\
$BC$       & 0   & 1   & lost                             \\
Mean, $ABC$ & - & - & absorbed in intercept            \\
\hline
\end{tabular}
\caption{Effect estimability under aliasing and blocking.}
\label{tab:index_table}
\end{table}

\subsection{Why aliasing-loss metrics are essential}\label{subsec:alias_imp}
The metrics $\rho$, $\delta$, and $\alpha$ reveal which contrasts are untestable before data are collected.  They aid fractional search by optimizing generators to maximize $\sum_E w_E\rho(E)$, thereby guiding automated design-construction algorithms beyond simple ``resolution IV/V'' rules.  They are essential for analyzing power curves which use var$(\hat\beta_E)\propto \alpha(E)$; ignoring inflation risks under-powered studies.  For mixed-model inference, exact numerator/denominator $\mathrm{df}$ supplied by $\delta(E)$ feed into correct $F-$tests, thereby avoiding liberal tests that plague Satterthwaite approximations when aliasing is heavy. If some $\mathrm{df}$ are lost, $\delta(E)$ pinpoints which extra runs restore them most efficiently, thus aiding sequential augmentation.

The eigenstructure of $\mathbf{A}(E)$ plugs into closed-form REML standard-error formulas; contrasts with $\rho(E)\ll1$ dominate the uncertainty budget. A single scalar criterion referred to as alias-loss index (ALI), given by $\text{ALI}(D) = \sum_{E}w_E\left(1-\rho(E)\right)$ ranks candidate designs; combinatorial search (genetic algorithms, IP solvers) can minimize ALI subject to run limits and blocking rules, thereby aiding optimal design under constrained budgets.  These metrics could provide software diagnostics, by seamlessly integrating $\rho(E)$, $\delta(E)$, $A(E)$ and $\alpha(E)$ in statistical packages ({\eg}, FrF2, AlgDesign, doebioresearch).  With the aliasing-loss machinery in place, exact variance-component estimation, power analysis, and algorithmic design optimisation reduces to substituting $\rho, \delta,$ or $\alpha$ into familiar mixed-model formulas.

\section{Corollaries}\label{sec:corollaries} 
The following three well-known results emerges as corollaries from the projector--rank framework: Cochran's identity, $\mathrm{df}$ for Yates' balanced split-plot and Box--Hunter resolution.  
\begin{corol}
Cochran's identity: If the design has a single randomization stratum and is fully balanced and orthogonal, then \thrmref{thrm:dof_partition} reduces to identity $N-1 = \sum_E \dim\mathcal{C}_E$.
\label{corr:cochran}
\end{corol}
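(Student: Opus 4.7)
The plan is straightforward: each hypothesis of the corollary deactivates one of the two summation indices in \thrmref{thrm:dof_partition}, and what remains after substitution is exactly Cochran's rank-additivity identity. I would execute this in three small steps—single-stratum collapse, alias-class collapse, and a grand-mean bookkeeping check—before substituting into \eqref{eq:dof_partition}. No new algebraic machinery is required; everything needed has already been assembled in the proof of the partition theorem.

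First, I would invoke $|\mathcal{S}| = 1$. Because the unique stratum must be the finest one (otherwise no contrasts would be observable on individual runs), its indicator matrix equals $\mathbf{I}_N$, so that $\mathbf{P}_{s_1} = \mathbf{I}_N$ and $\operatorname{Im}\mathbf{P}_{s_1} = \mathbb{R}^N$. The outer sum $\sum_{s \in \mathcal{S}}$ in \eqref{eq:dof_partition} thereby collapses to a single term, and for every effect the intersection simplifies to $\mathcal{C}_{\overline{E}} \cap \operatorname{Im}\mathbf{P}_{s_1} = \mathcal{C}_{\overline{E}}$. Second, I would deploy full balance and orthogonality: these imply a trivial alias ideal $\mathcal{A} = \{\varnothing\}$, so each equivalence class under $\sim_{\mathcal{A}}$ is a singleton. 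Consequently the quotient lattice $\overline{\mathcal{E}}$ coincides with $2^{\{1,\ldots,k\}}$ and $\mathcal{C}_{\overline{E}} = \mathcal{C}_E$ for every factorial subset $E$.

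The main obstacle, modest as it is, lies in the grand-mean convention already embedded in the proof of \thrmref{thrm:dof_partition}: the ``$-1$'' on the left-hand side of \eqref{eq:dof_partition} reflects that the all-ones column has been removed once at the outset, so that everything lives in the centered space of dimension $N-1$. To keep the corollary clean I would verify that $\mathbf{P}_{s_1}$ is to be read as the identity on this centered space, and that each $\mathcal{C}_E$ already satisfies $\sum_\lambda c_\lambda = 0$ by its defining constraint, so no double-counting of the grand mean occurs. Once this consistency check is made, direct substitution of the two collapses into \eqref{eq:dof_partition} yields $N - 1 = \sum_E \dim \mathcal{C}_E$, which is precisely Cochran's rank-additivity identity for a single-stratum, fully balanced factorial design.
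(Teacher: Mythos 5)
Your proposal is correct and follows essentially the same route as the paper: collapse the stratum sum via $\mathbf{P}_{s_1}=\mathbf{I}_N$, note that balance and orthogonality make the alias classes trivial so $\mathcal{C}_{\overline{E}}=\mathcal{C}_E$, and substitute into \eqref{eq:dof_partition}, with the grand-mean convention accounting for the $N-1$. Your explicit remark on the trivial alias ideal and the centering bookkeeping is a slightly more careful articulation of what the paper states as ``balanced and complete factorial,'' but it is not a different argument.
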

\begin{proof}
For a single stratum with $\lvert \mathcal S\rvert = 1$, the projector family reduces to $\mathbf{P}_{s_0}=\mathbf{I}_N$.  Therefore, for every effect $E$, $\dim\left(\mathcal C_E\cap\operatorname{Im}\mathbf{P}_{s_0}\right) = \dim\left(\mathcal C_E\cap \mathbf{I}_N\right) = \dim\mathcal C_E$.  Since the design is balanced and complete factorial, the contrast spaces $\{\mathcal C_E\}$ are pairwise orthogonal and $\bigoplus\limits_{E}\mathcal C_E = \left(\mathbf 1_N\right)^{\!\perp}\subset\mathbb R^{N}$, whose dimension is $N-1$.  Substituting these results into \thrmref{thrm:dof_partition} yields Cochran's identity: $N-1 = \sum_E \dim\mathcal{C}_E$.  This completes the proof of \corolref{corr:cochran}. 
\end{proof}

\begin{corol}
Yates' balanced split-plot $\mathrm{df}$:  Let an experiment have a whole-plot (fixed) factor $A$ with $a \ge 2$ levels, $b\ge 2$ whole-plot replicates per level of $A$, and a sub-plot (fixed) factor $B$ with $c\ge 2$ levels applied once in each whole plot, so that the total number of observations is $N=abc$.
Denote the two randomization strata by $s_{\mathrm{w}}\;(\text{whole plots})$ and $s_{\mathrm{s}}\;(\text{sub-plots})$, with idempotent projectors $\mathbf{P}_{\mathrm{w}}$ and $\mathbf{P}_{\mathrm{s}} = \mathbf{I}-\mathbf{P}_{\mathrm{w}}$.  Let $\mathcal C_A,\mathcal C_B,\mathcal C_{AB}$ be the usual balanced-contrast subspaces for the effects $A$, $B$, and $AB$, respectively. Then, as a direct consequence of \thrmref{thrm:dof_partition}, 
\begin{eqnarray*}
\dim\left(\mathcal C_E\cap\operatorname{Im}\mathbf{P}_s\right) =
\begin{cases}
a-1,  (E,s) = (A,s_{\mathrm{w}}),\\
c-1, (E,s) = (B,s_{\mathrm{s}}),\\
(a-1)(c-1),  (E,s) = (AB,s_{\mathrm{s}}),\\
0,  \text{otherwise}.
\end{cases}
\end{eqnarray*}
Hence the balanced split-plot ANOVA table is given by \tabref{tab:yates_dof}.
\begin{table}[h!]
\centering
\begin{tabular}{|c|c|}
\hline
\textbf{Source} & \textbf{$\mathrm{df}$} \\
\hline
Whole-plot factor $A$ & $a - 1$ \\
Whole-plot error      & $a(b - 1)$ \\
Sub-plot factor $B$   & $c - 1$ \\
$AB$ interaction      & $(a - 1)(c - 1)$ \\
Sub-plot error        & $a(b - 1)(c - 1)$ \\ \hline
\textbf{Total}        & $abc - 1$ \\
\hline
\end{tabular}
\caption{$\mathrm{df}$ for a split-plot design.}
\label{tab:yates_dof}
\end{table}
This reproduces the allocation given by \cite[Section 5]{Yates1935} for balanced split-plots, confirming that the classical result is the one-stratum-per-effect special case of the general projector--rank framework.
\label{corr:yates_dof}
\end{corol}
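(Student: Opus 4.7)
The plan is to exploit the complete tensor-product structure of a balanced split-plot, so that both the stratum projectors and the factorial contrast subspaces acquire matching Kronecker factorizations and every intersection demanded by \thrmref{thrm:dof_partition} can be read off factorwise. First, I would label each observation by the triple $(i,j,k)$, with $i\in\{1,\dots,a\}$ denoting the level of $A$, $j\in\{1,\dots,b\}$ the whole-plot replicate inside $A_i$, and $k\in\{1,\dots,c\}$ the level of $B$, and identify $\mathbb{R}^N \cong \mathbb{R}^a\otimes\mathbb{R}^b\otimes\mathbb{R}^c$. In these coordinates, averaging within a whole plot is averaging over $k$, so $\mathbf{P}_{\mathrm{w}} = \mathbf{I}_a \otimes \mathbf{I}_b \otimes \tfrac{1}{c}\mathbf{J}_c$ and $\mathbf{P}_{\mathrm{s}} = \mathbf{I}_a \otimes \mathbf{I}_b \otimes (\mathbf{I}_c - \tfrac{1}{c}\mathbf{J}_c)$. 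Writing $\mathcal{K}_m$ for the $(m-1)$-dimensional orthogonal complement of $\mathbf{1}_m$ in $\mathbb{R}^m$, the three balanced contrast subspaces factor as $\mathcal{C}_A = \mathcal{K}_a\otimes\mathbf{1}_b\otimes\mathbf{1}_c$, $\mathcal{C}_B = \mathbf{1}_a\otimes\mathbf{1}_b\otimes\mathcal{K}_c$, and $\mathcal{C}_{AB}=\mathcal{K}_a\otimes\mathbf{1}_b\otimes\mathcal{K}_c$.

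Next I would invoke the standard identity that for tensor-factorized subspaces and Kronecker-product idempotents the intersection distributes across factors: $(V_1\otimes V_2\otimes V_3)\cap\operatorname{Im}(\mathbf{Q}_1\otimes\mathbf{Q}_2\otimes\mathbf{Q}_3) = \bigotimes_{i} (V_i\cap\operatorname{Im}\mathbf{Q}_i)$. The only one-dimensional lookups required reduce to the observations that $\mathbf{1}_c$ lies in $\operatorname{Im}\tfrac{1}{c}\mathbf{J}_c$ and is annihilated by $\mathbf{I}_c-\tfrac{1}{c}\mathbf{J}_c$, whereas $\mathcal{K}_c$ lies in $\operatorname{Im}(\mathbf{I}_c-\tfrac{1}{c}\mathbf{J}_c)$ and is annihilated by $\tfrac{1}{c}\mathbf{J}_c$, while the $a$- and $b$-factors act by the identity on either side. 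Applying these elementary rules to the six pairs $(\mathcal{C}_E,\mathbf{P}_s)$ immediately yields the four nonzero dimensions $a-1$, $c-1$, $(a-1)(c-1)$, together with four zeros, exactly as the corollary states.

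The two residual rows of \tabref{tab:yates_dof} then fall out of rank arithmetic. Since $\operatorname{rank}(\mathbf{P}_{\mathrm{w}}) = ab$ and $\operatorname{rank}(\mathbf{P}_{\mathrm{s}}) = ab(c-1)$, and the grand-mean direction occupies one dimension of $\operatorname{Im}\mathbf{P}_{\mathrm{w}}$, the orthogonal complement of the allocated treatment contrasts within each stratum has dimension $ab-1-(a-1)=a(b-1)$ at the whole-plot level and $ab(c-1)-(c-1)-(a-1)(c-1)=a(b-1)(c-1)$ at the sub-plot level. A final check that the six summands add to $abc$ verifies the projector-rank identity row by row and reproduces Yates' table.

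The only subtle point is ensuring that the whole-plot error and sub-plot error rows really span all remaining directions in their respective strata without hiding an extra treatment-aligned contrast. The Kronecker decomposition handles this automatically, because $\{\mathbf{P}_{\mathrm{w}},\mathbf{P}_{\mathrm{s}}\}$ and each factorial contrast subspace share a common tensor eigenbasis, so every dimension of $\mathbb{R}^N$ is accounted for in exactly one cell of the stratum-by-effect grid. Were balance violated, this simultaneous eigenbasis would fail and one would have to revert to the Gram-matrix rank computations of \secref{subsec:split_plot}; in the balanced case it makes every step purely algebraic.
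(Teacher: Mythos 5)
Your proof is correct and follows essentially the same route as the paper: both exploit the Kronecker factorization $\mathbf{P}_{\mathrm{w}}=\mathbf{I}_a\otimes\mathbf{I}_b\otimes\tfrac1c\mathbf{J}_c$, note that each balanced contrast subspace lies entirely inside the image of one stratum projector and is annihilated by the other, and obtain the two error rows by subtracting the allocated contrast dimensions from the stratum ranks. Your phrasing via intersections distributing over tensor factors versus the paper's Gram-matrix ranks $\operatorname{rank}\left(\mathbf{X}_{E}^{\prime}\mathbf{P}_s\mathbf{X}_{E}\right)$ is only a presentational difference.
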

\begin{proof}
The randomization strata are the whole-plot stratum $s_{\mathrm{w}}$ with units $(i,j)$ of size $c$ and the sub-plot stratum $s_{\mathrm{s}}$ with individual observations $(i,j,k)$.  Define $\mathbf{P}_{\mathrm{w}} = I_{a}\;\otimes\;I_{b}\;\otimes\;\tfrac1c\mathbf{J}_{c}$ and $\mathbf{P}_{\mathrm{s}} = I_{abc} - \mathbf{P}_{\mathrm{w}}$, where $\mathbf{J}_{c}$ is the $c\times c$ all-ones matrix.  Let $\mathcal{J}_{n}=\operatorname{span}\{\mathbf{J}_{n}\mathbf1_{n}\}$, $\mathcal C_{n}=\mathcal \mathbf{J}_{n}^{\perp}$, $\dim\mathcal C_{n} = n-1$.  Then, the basis for the effects $A$, $B$ and $AB$ are $\mathcal C_{a}\otimes\mathcal \mathbf{\mathbf{J}}_{b}\otimes\mathcal \mathbf{\mathbf{J}}_{c}$ with dimension $a-1$, $\mathcal \mathbf{\mathbf{J}}_{a}\otimes\mathcal \mathbf{\mathbf{J}}_{b}\otimes\mathcal C_{c}$ with dimension $c-1$ and $\mathcal C_{a}\otimes\mathcal \mathbf{\mathbf{J}}_{b}\otimes\mathcal C_{c}$ with dimension $(a - 1)c(c - 1)$, respectively.   Write $\mathbf{X}_A,\;\mathbf{X}_B,\;\mathbf{X}_{AB}$ for any full-rank matrices whose columns span these subspaces.  A direct consequence of \thrmref{thrm:dof_partition}, we have the following.  

First, consider the whole-plot stratum $s_{\mathrm{w}}$.  Since $\mathbf{P}_{\mathrm{w}}\left(\mathcal \mathbf{\mathbf{J}}_{b}\otimes\mathcal \mathbf{\mathbf{J}}_{c}\right) = \mathcal \mathbf{\mathbf{J}}_{b}\otimes\mathcal \mathbf{\mathbf{J}}_{c}$,  $\operatorname{Im}\mathbf{P}_{\mathrm{w}} = \mathcal \mathbf{\mathbf{J}}_{a}\otimes\mathcal \mathbf{\mathbf{J}}_{b}\otimes\mathcal \mathbf{\mathbf{J}}_{c}$.  For factor $A$, $\mathbf{X}_A$ is constant within whole plots. Therefore, $\mathbf{X}_A^{\!\prime}\mathbf{P}_{\mathrm{w}}\mathbf{X}_A = \mathbf{X}_A^{\!\prime}\mathbf{X}_A$ (full rank), which implies $\mathrm{df}$$(A,s_{\mathrm{w}})=a-1$.  Factors $B$ and $AB$ each varies within whole plots.  Therefore, $\mathbf{P}_{\mathrm{w}}\mathbf{X}_B=\mathbf{P}_{\mathrm{w}}\mathbf{X}_{AB}=0$ which implies no $\mathrm{df}$ at $s_{\mathrm{w}}$.  With the number of whole plots $=ab$, removing the grand mean and $A$ leaves $\mathrm{df}$$(\text{W.Err}) = ab-1-(a-1)=a(b-1)$ for the whole-plot error. Next, consider the sub-plot stratum $s_{\mathrm{s}}$.  Here, $\mathbf{P}_{\mathrm{s}} = \mathbf{I} -\mathbf{P}_{\mathrm{w}}$.  For factor $A$, $\mathbf{P}_{\mathrm{s}}\mathbf{X}_A=0$ which implies no $\mathrm{df}$. For factor $B$, $\mathbf{P}_{\mathrm{s}}\mathbf{X}_B=\mathbf{X}_B$ (already orthogonal to whole-plot means), which implies $\mathrm{df}$ $(B,s_{\mathrm{s}}) = c-1$.  For factor $AB$, similarly $\mathrm{df}$$(AB,s_{\mathrm{s}}) = (a-1)(c-1)$.  The residual $\mathrm{df}$ in this stratum are $\mathrm{df}$ $(\text{S.Err}) = N-1 -[(a-1)+(a(b-1))+(c-1)+(a-1)(c-1)] = a(b-1)(c-1)$ for the sub-plot error.  These results exactly match the $\mathrm{df}$ for a split-plot design summarized in \tabref{tab:yates_dof}.  This completes the proof of \corolref{corr:yates_dof}. 
\end{proof}

\begin{corol}
Box--Hunter resolution: Let the experiment be a regular $2^{k-p}$ fractional factorial with independent defining words $g_1,\dots ,g_{p}\in2^{\{1,\dots ,k\}}$; the \cite{Box1961} resolution $R$ be $\displaystyle R=\min_{j}|g_j|$, the length of the shortest defining word;  there be one randomization stratum ($|\mathcal S|=1,\;\mathbf{P}_{s_0}=\mathbf{I}_N$); and $\rho(\overline E) =\dfrac{\dim\left(\mathcal C_{\overline E}\cap \mathbf{I}_N\right)}{\dim\mathcal C_{\overline E}}$ be the $\mathrm{df}$ retention ratio. Then $R = \min\left\{|E|\;:\;\rho(\overline E) = 0\right\}$. 
\label{corr:box_hunter}
\end{corol}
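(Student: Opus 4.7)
The plan is to specialize \thrmref{thrm:dof_partition} to the single-stratum regime and then translate its conclusion about the alias ideal into the classical Box--Hunter resolution statement. First I would observe that with $|\mathcal S|=1$ and $\mathbf{P}_{s_0}=\mathbf{I}_N$, the intersection $\mathcal C_{\overline E}\cap\operatorname{Im}\mathbf{P}_{s_0}$ is simply $\mathcal C_{\overline E}$ itself, and since every factor has $l_i=2$, the ideal dimension $\prod_{i\in E}(l_i-1)=1$ for every nonempty $E$. Consequently $\rho(\overline E)\in\{0,1\}$, and proving the corollary reduces to characterizing exactly when $\rho(\overline E)=0$.

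The second step is to show that $\rho(\overline E)=0$ if and only if $E\in\mathcal A$. The ``if'' direction follows from $E\sim_{\mathcal A}\emptyset$: then $\overline E$ coincides with the intercept alias class, and after removing the grand-mean column (as done at the start of the proof of \thrmref{thrm:dof_partition}) the associated contrast space is the zero subspace. The ``only if'' direction is the contrapositive: when $E\notin\mathcal A$ the alias class $\overline E$ lies in $(\mathbf{1}_N)^\perp$ and, being the image of a single nontrivial factorial contrast on a two-level grid, retains its one dimension, so $\rho(\overline E)=1$.

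The third step passes from this set-theoretic minimum to the resolution itself. The alias ideal $\mathcal A$ is an $\mathbb F_2$-linear subspace of $\mathbb F_2^{\{1,\dots,k\}}$ generated by the words $g_1,\dots,g_p$ under symmetric difference, and its minimum Hamming weight on nonzero elements is by definition the Box--Hunter resolution $R$. Under the standard convention that the listed defining words include one attaining this minimum (i.e.\ no $\mathbb F_2$-combination yields a shorter word), we have $R=\min_j|g_j|$. Combining with the previous step gives
\begin{eqnarray*}
\min\{|E|:\rho(\overline E)=0\} \;=\; \min\{|E|\ge 1 : E\in\mathcal A\} \;=\; R,
\end{eqnarray*}
which is the desired identity.

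The main obstacle is a small bookkeeping issue rather than a deep step: one must handle the intercept carefully, since \thrmref{thrm:dof_partition} is stated after deleting the grand-mean column. I would therefore make explicit that ``$E$ aliased with the mean'' and ``$\dim(\mathcal C_{\overline E})=0$'' refer to the \emph{same} statement once the intercept has been removed, so that the degenerate alias class genuinely contributes zero retained $\mathrm{df}$ rather than being indeterminate. With that subtlety pinned down, everything else follows mechanically from \thrmref{thrm:dof_partition} and the definition of the resolution.
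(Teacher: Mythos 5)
Your proposal is correct and follows essentially the same route as the paper's proof: specialize \thrmref{thrm:dof_partition} to the single stratum $\mathbf{P}_{s_0}=\mathbf{I}_N$, use $\dim\mathcal C_E=1$ for two-level effects to show $\rho(\overline E)=0$ exactly when $E\in\mathcal A$ (the class aliased with the mean, which vanishes once the grand mean is removed), and identify $\min\{|E|:\rho(\overline E)=0\}$ with the shortest word in the alias ideal. Your third step is in fact slightly more careful than the paper's, which asserts without qualification that the shortest element of $\mathcal A$ is among the listed generators $g_1,\dots,g_p$; your explicit convention that no $\mathbb F_2$-combination of the $g_j$ attains a smaller Hamming weight pins down exactly the point the paper glosses over.
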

\begin{proof}
In a regular fraction the alias ideal generated by the defining words is given by $\mathcal A = \langle g_1,\dots ,g_p\rangle\subset 2^{\{1,\dots ,k\}}$.  For any factorial effect $E$ the aliased class is $\overline E = E + \mathcal A$.  Since the design is balanced and two-level, the contrast space for a single effect has dimension $\dim\mathcal C_E = 1$.  With $\mathbf{P}_{s_0}=\mathbf{I}_N$, the numerator of $\rho$ is $\dim(\mathcal C_{\overline E})$ if $\mathcal C_{\overline E}\neq\{0\}$ and zero otherwise.  Hence,  $\rho(\overline E) = 1$ if $E \notin \mathcal{A}$ and $\rho(\overline E) = 0$ if $E \in \mathcal{A}$. An effect (alias class) has $\rho=0$ iff the effect itself lies in the ideal: $\rho(\overline E) = 0 \Longleftrightarrow E \in \mathcal{A} \Longleftrightarrow \exists g_j$ such that $E=g_j$ or $\lvert E\rvert \ge \lvert g_j\rvert$ for some $j$.  The shortest effect contained in $\mathcal A$ is precisely the shortest defining word, whose length by definition equals $R$.  Therefore $\min\left\{\lvert E \rvert : \rho(\overline E)=0\right\}=R$. The resolution-$R$ criterion (``length of the shortest effect aliased with the mean'') is identical to the statement ``the smallest effect order whose $\mathrm{df}$-retention ratio vanishes.''  Thus, resolution is obtained as an immediate corollary of the $\mathrm{df}$-retention metric.  This completes the proof of \corolref{corr:box_hunter}. 
\end{proof}
 
\section{Discussion}\label{sec:discussion}  \vspace{-0.1in}
In this section, we summarize the structural conditions under which exact $\mathrm{df}$ can exhibit meaningful departures from Satterthwaite or Kenward--Roger approximations (\secref{sec:stat_interpret}).  The principal mechanisms are stratum mismatch, contrast compression through aliasing, and rank collapse induced by unequal replication.  We provide design-metadata diagnostics (in \secref{sec:without_decomp}) that predict these departures and indicate whether standard analyses are likely to be liberal or conservative.
\vspace{-0.1in}

\subsection{Statistical interpretation}\label{sec:stat_interpret} 
In this subsection, we discuss why our exact $\mathrm{df}$ differ from the approximations suggested by \cite{Satterthwaite1946} and \cite{Kenward1997}.  The projector--rank identity $N-1 = \sum_{s\in\mathcal S}\sum_{\overline E}\dim\left(\mathcal C_{\overline E}\cap \operatorname{Im}\mathbf P_s\right)$ admits a direct statistical interpretation that is independent of the linear-algebra proof. Each factorial effect ($\overline E$) corresponds to a family of orthogonal treatment contrasts, and each randomization stratum ($s$) corresponds to the subspace of $\mathbb R^N$ containing variation that is ``visible'' at that stage of randomization. The intersection $\mathcal C_{\overline E} \cap \operatorname{Im}\mathbf P_s$ therefore counts the number of independent contrast directions for $\overline E$ that survive randomization and remain testable against the mean square residing in stratum $s$.  When common approximations deliver denominator $\mathrm{df}$ that differ materially from our integer ranks, it is not a numerical artifact: it indicates that the approximation is implicitly testing $\overline E$ against a variance subspace that does not contain the relevant contrast directions (or contains them only partially), thereby misrepresenting the true effective sample size for that effect.

An alternative way to read the intersection dimensions is through the geometry of the mixed model covariance. Let $\mathbf{V} = \sum_{s\in\mathcal S}\sigma_s^2 \mathbf{P}_s$ be the stratum decomposition of the covariance induced by the randomization, and let $\mathbf{X}_{\overline E}$ be any full-column representation of $\mathcal C_{\overline E}$.  The numerator for testing $\overline E$ depends only on the projection of vector $\mathbf{y} \in \mathbb{R}^N$ of data responses (observations) onto $\operatorname{Im}(\mathbf{X}_{\overline E})$, whereas the denominator must reflect the variance component(s) that act nontrivially on that subspace. The dimension $\dim(\mathcal C_{\overline E}\cap\operatorname{Im}\mathbf{P}_s)$ is precisely the number of linearly independent contrast directions in $\mathbf{X}_{\overline E}$ along which the $s^{\text{th}}$ variance component contributes variability.  In this sense, \thrmref{thrm:dof_partition} does not merely partition rank; it partitions information-carrying variance directions among strata.  Approximations differ materially when they implicitly treat $\mathbf{X}_{\overline E}$ as if its variability were dominated by a different combination of $\{\mathbf{P}_s\}$ than the design actually induces.

Satterthwaite and Kenward--Roger-type methods approximate the distribution of a quadratic form by matching moments to an $F$ distribution with effective $\mathrm{df}$. This works well when the quadratic form behaves like a scaled chi-square, which is most plausible when the relevant covariance is close to a scalar multiple of the identity on the contrast subspace. In multi-stratum settings, however, the covariance on $\mathcal C_{\overline E}$ is typically a mixture of stratum contributions, and the mixture weights depend on how $\mathbf{X}_{\overline E}$ aligns with $\operatorname{Im}\mathbf{P}_s$.  If the approximation overweights a lower-level residual component, the inferred denominator df becomes too large (liberal).  If it overweights higher-level clustering, the inferred denominator $\mathrm{df}$ becomes too small (conservative).  \thrmref{thrm:dof_partition} explains these outcomes without distributional approximations: the alignment pattern is encoded deterministically in the intersection dimensions.

Large $\mathrm{df}$ discrepancies arise through three design mechanisms that can be described without computation.
\begin{enumerate}[(M1)]
\item \textbf{Stratum mismatch (wrong error subspace):}  In multi-stage randomization (split-plots, row--column designs, nested hierarchies), the correct denominator for a given effect is the stratum where its contrasts vary. If an approximation (or a na\"{i}ve ANOVA table) uses a lower stratum residual as denominator, it effectively treats correlated whole-plot units as independent observations. This inflates denominator $\mathrm{df}$ and produces liberal tests.  In the projector--rank context, the phenomenon occurs precisely because $\mathcal C_{\overline E}$ has little or no overlap with $\operatorname{Im}\mathbf{P}_{\text{res}}$, but has nontrivial overlap with $\operatorname{Im}\mathbf{P}_{\text{WP}}$; \thrmref{thrm:dof_partition} forces $\overline E$ to ``live'' in the correct stratum.  Here, the matrix $\mathbf{P}_{\text{WP}}$ is defined as follows:  Let $N=abc$ and index the $ab$ whole plots by $u=1,\dots,ab$. Define the incidence matrix $\mathbf{Z}_{\mathrm{WP}} \in \{0,1\}^{N\times ab}$ by $(\mathbf{Z}_{\mathrm{WP}})_{i,u}=1$ if observation $i$ lies in whole plot $u$ and $0$ otherwise. Let $\mathbf{y}$ denote the data vector of responses, viewed as an element of $\mathbb{R}^N$. Concretely, if $y_{ijk}$ denotes the observed response at level $i$ of $A$, whole plot $j$ within that level, and sub-plot $k$ within that whole plot, then we stack these into a single column vector $\mathbf{y} = \left(y_{111},y_{112},\ldots,y_{11c},,y_{121},\ldots, y_{ab,c}\right)^{\prime}\in\mathbb{R}^N$, for some fixed ordering of the $N = abc$ observations. With this convention, $\mathbf{P}_{\mathrm{WP}}\mathbf{y}$ is the vector whose entries are the corresponding whole-plot means (repeated across the $c$ sub-plots within each whole plot).  The whole-plot averaging projector is $\mathbf{P}_{\mathrm{WP}} = \mathbf{Z}_{\mathrm{WP}}(\mathbf{Z}_{\mathrm{WP}}^{\prime}\mathbf{Z}_{\mathrm{WP}})^{-1}\mathbf{Z}_{\mathrm{WP}}^{\prime}$, so that $(\mathbf{P}_{\mathrm{WP}}\mathbf{y})_i$ equals the mean of $\mathbf{y}$ within the whole plot containing observation $i$; hence $\operatorname{Im}\mathbf{P}_{\mathrm{WP}}$ is the subspace of vectors constant within each whole plot.

\item \textbf{Aliasing and partial confounding (contrast compression):}  In fractional factorials and blocked fractions, defining relations collapse multiple nominal effects into a single estimable contrast space. Standard ``nominal $\mathrm{df}$'' counting can continue to attribute $\dim\mathcal C_E$ $\mathrm{df}$ to each effect even when those contrast directions are not separately identifiable. \thrmref{thrm:dof_partition} allocates $\mathrm{df}$ to the alias class $\overline E$ via the rank of its shared contrast space; the deficit $\delta(\overline E)$ is the exact number of lost contrast dimensions.

\item \textbf{Imbalance-induced rank collapse (unequal replication or missingness):}  With missing plots or unequal subclass numbers, columns that are orthogonal in a balanced design can become linearly dependent after projection to a stratum. This reduces $\operatorname{rank}(\mathbf{X}_{\overline E}^{\prime}\mathbf{P}_s \mathbf{X}_{\overline E})$ below its nominal value even when the model remains estimable. Approximate $\mathrm{df}$ procedures often smooth this behavior via moment matching, but the rank identity shows that $\mathrm{df}$ can drop discretely when replication patterns force certain contrasts to vanish ({\eg}, a level combination never occurs within a stratum).
\end{enumerate}
The aforementioned mechanisms explain why approximate methods may be either too liberal (M1) or too conservative (M2/M3 combined with clustering): $\mathrm{df}$ errors reflect structural misalignment between contrasts and error strata, not merely finite-sample randomness.  It is useful to connect the three mechanisms (M1--M3) to recognizable design archetypes:
\begin{enumerate}[(i)]
\item Split-plot/strip-plot: M1 dominates. Whole-plot contrasts are constant within whole plots; treating sub-plots as independent multiplies the effective unit count and inflates denominator $\mathrm{df}$.
\item Crossed-nested hierarchies ({\eg}, center $\rightarrow$ lab $\rightarrow$ technician $\rightarrow$ specimen): M1 and M3 interact. Missingness that is benign at the lowest level can collapse contrast rank within intermediate strata if it is concentrated within certain units.
\item Blocked fractions: M2 dominates. The defining relations create alias classes whose shared contrast space can lose $\mathrm{df}$ when block projectors remove certain words.
\item Row-column/incomplete block: M3 dominates. Unbalanced incidence can make nominally orthogonal contrasts dependent after stratum projection, causing discrete df drops that approximations tend to ``smooth over.''
\end{enumerate}
The conceptual message is that $\mathrm{df}$ differences are predictable from the experimental hierarchy, confounding relations, and replication pattern.

\subsection{Practical diagnostic guidance}\label{sec:without_decomp} 
In this subsection, we provide diagnostics that practitioners can use \textit{without running the full projector--rank algorithm}. The following results provide quick screening rules based only on design metadata (unit counts, incidence, and replication summaries). They do not replace exact computation when a full $\mathrm{df}$ table is needed, but they predict when $\mathrm{df}$ discrepancies are likely to be non-negligible and in which direction.

\subsubsection{Universal upper bounds from stratum unit counts}  
Let $\mathbf{Z}_s$ be the $N\times q_s$ incidence matrix of the units in stratum $s$ with $q_s$ nonempty units, so that $\operatorname{Im}\mathbf{P}_s = \operatorname{Im}\mathbf{Z}_s$. Then, 
\begin{eqnarray}
\dim\left(\mathcal C_{\overline E}\cap \operatorname{Im}\mathbf{P}_s\right) \le \min\left\{\dim\mathcal C_{\overline E},\ \operatorname{rank}(\mathbf{Z}_s)-1\right\}.
\label{eq:ub_stratum}
\end{eqnarray}
The term $\operatorname{rank}(\mathbf{Z}_s) - 1$ is at most $q_s - 1$, the $\mathrm{df}$ available among unit means in that stratum after removing the grand mean. Consequently, if an effect has nominal $\mathrm{df}$ exceeding $q_s - 1$, it cannot retain all nominal $\mathrm{df}$ in stratum $s$.  This bound is computable without any projectors: it depends only on the number of stratum units and whether empty units exist.  The interpretation is that \eqref{eq:ub_stratum} yields an immediate warning flag: if software reports denominator $\mathrm{df}$ substantially larger than $q_s - 1$ for a test that logically belongs to stratum $s$, then the test is not respecting the randomization hierarchy (mechanism M1).

\subsubsection{A replication-based screening proxy for $\rho(\overline E)$}  
For an effect $\overline E$ nominally tested in stratum $s$, define the cell-count profile within each stratum unit $u$ as the vector $\mathbf n_u=(n_{u,\ell})_{\ell\in\Lambda_E}$, where $n_{u,\ell}$ counts how often level-combination $\ell$ of $E$ appears inside unit $u$. If many units have sparse or degenerate profiles ({\eg}, certain levels never appear within units), then many contrasts necessarily vanish after stratum projection, implying $\rho(\overline E)\ll 1$.  A simple proxy is
\begin{eqnarray}
\widetilde\rho(\overline E; s)
=
\frac{
\lvert \!\left\{
\text{linearly independent centered profiles }(\mathbf{n}_u - \bar{\mathbf{n}})
\right\}\rvert
}{
\dim \mathcal C_{\overline E}
},
\label{eq:replication}
\end{eqnarray}
where the numerator is the rank of the matrix whose rows are the centered profiles. The notation $\lvert \{\cdot\} \rvert$ denotes the cardinality of the set $\{\cdot\}$.  This proxy depends only on counts (no responses), and it flags imbalance-induced rank collapse (M3).  As a rule of thumb, if more than (say) 10--20\% of stratum units have zero counts in one or more $\ell \in \Lambda_E$, then $\widetilde\rho(\overline E;s)$ will be substantially below 1, and approximate $\mathrm{df}$ methods are likely to disagree with the exact integer $\mathrm{df}$.

\subsubsection{A practitioner's diagnostic checklist}
The following design features predict non-negligible $\mathrm{df}$ discrepancies, even before fitting a model: \vspace{-0.1in}
\begin{enumerate}[(i)]
\item More than one randomization stage (split-plot, row--column, nested): expect liberal tests if the denominator $\mathrm{df}$ are based on the lowest-level residual for whole-plot effects (M1).
\item Blocked fractions or deliberate confounding: expect $\mathrm{df}$ deficits for specific effects/alias classes if the block generator shares short words with effects of interest (M2).
\item Unequal replication/missing cells within higher strata: expect discrete $\mathrm{df}$ drops for interactions and for any effect whose level combinations are absent within many units (M3).
\item High nominal $\mathrm{df}$ relative to number of higher-level units: if $\dim\mathcal C_{\overline E} > q_s-1$, full retention in that stratum is impossible.
\end{enumerate} 
\vspace{-0.1in}
These diagnostics explain when and why $\mathrm{df}$ differ by a non-negligible amount, and they can be applied using only the design layout.

For an effect $\overline E$ intended to be tested in stratum $s$, full retention of nominal df in that stratum occurs if and only if $\operatorname{rank}(\mathbf{X}_{\overline E}) = \operatorname{rank}(\mathbf{P}_s \mathbf{X}_{\overline E})$, {\ie}, the stratum projection does not collapse any contrast directions. This condition can be checked using only $\mathbf{X}_{\overline E}$ and the action of $\mathbf{P}_s$ as an averaging operator\textemdash no $N \times N$ matrices are required.  In practice, $\mathbf{P}_s \mathbf{X}_{\overline E}$ is computed by replacing each column of $\mathbf{X}_{\overline E}$ with its within-unit means in stratum $s$, a computation that is linear in $N$ and uses only group-by averaging.  This yields a strong, interpretable diagnostic: if rank drops under averaging, then df must be lost in that stratum.

This work provides practitioners with not only a warning but also clear guidance on whether standard software is likely to yield liberal or conservative inference.  The following rule is broadly valid:  \textit{If an effect's contrasts vary only at a higher stratum ({\eg}, whole-plot level), but the reported denominator $\mathrm{df}$ scales with the number of lower-level observations, the analysis is at risk of being liberal.  Conversely, if an effect varies at the lowest level, but the fitted model attributes large variance to higher strata (strong clustering), approximations tend to be conservative.}  This is exactly what the two mechanism-demonstration simulations show: ignoring clustering (split-plot case) yields false positives; ignoring the correct within-unit residual (nested case) yields false negatives.

Many users want a quick triage without building $\mathbf{X}_{\overline E}$.  For main effects and low-order interactions in $2^k$ (or regular multi-level) designs, one can screen using only incidence and replication counts.  For each stratum $s$ with $q_s$ nonempty units, let $m_{s,\ell}$ be the number of units in which level-combination $\ell$ appears at least once.  Define
\begin{eqnarray*}
\eta(\overline E;s) = \frac{\min_{\ell\in\Lambda_E} m_{s,\ell}}{q_s}.
\end{eqnarray*}
If $\eta(\overline E;s)$ is small, then many units lack certain levels of $E$, so within-stratum contrasts cannot be formed, and $\mathrm{df}$ loss is likely.  This is especially informative under missingness that is not completely random with respect to strata ({\eg}, entire sub-plots missing within specific whole plots).

In practice, to anticipate $\mathrm{df}$ anomalies, inspect (i) the number of units in each randomization stratum, (ii) whether each treatment level (or level combination) appears broadly across those units, and (iii) whether missingness is concentrated within particular units. These metadata determine whether stratum averaging collapses treatment contrasts. When either $q_s$ is small relative to nominal $\mathrm{df}$ or $\eta(\overline E;s)$ is low, approximate $\mathrm{df}$ procedures are most likely to disagree with the integer $\mathrm{df}$ implied by the design.

\subsection{Three schematic examples illustrating the mechanisms}\label{sec:two_examples}   
\subsubsection{Split-plot: why na\"{i}ve $\mathrm{df}$ are liberal} 
Consider a split-plot with whole-plot factor $A$ (levels $a$) randomized to whole plots and factor $B$ randomized within whole plots. Let whole plots be the stratum $s = \text{WP}$ with $q_{\text{WP}}$ units and projector $\mathbf{P}_{\text{WP}}$, and let residual be the sub-plot stratum $s = \text{SP}$ with projector $\mathbf{P}_{\text{SP}}$. The contrasts for $A$ are constant within each whole plot, hence lie in $\operatorname{Im}\mathbf{P}_{\text{WP}}$ but are orthogonal to within-whole-plot deviations. Consequently, we have $\dim(\mathcal C_A \cap \operatorname{Im}\mathbf{P}_{\text{SP}}) = 0$ and $\dim(\mathcal C_A \cap \operatorname{Im}\mathbf{P}_{\text{WP}}) = a-1$.  Any procedure that tests $A$ against $\text{MS}_{\text{SP}}$ ({\ie}, the sub-plot mean square associated with within-whole-plot (sub-plot) variation in a split-plot ANOVA table) is therefore using an error subspace that contains none of the $A$-contrast directions. Its reported denominator $\mathrm{df}$ effectively counts sub-plot observations as independent for the purpose of $A$, inflating $\mathrm{df}$ and producing liberal inference (mechanism M1).  \thrmref{thrm:dof_partition} prevents this by forcing the $A$ contrasts to be accounted for only in the WP stratum.

In a split-plot, the whole-plot factor $A$ is randomized over whole plots, so the effective experimental units for $A$ are the whole plots\textemdash not the sub-plots. If there are $a$ levels and $b$ whole plots per level, the number of independent whole-plot deviations is $a(b-1)$, and any valid denominator $\mathrm{df}$ must be on that scale.  A na\"{i}ve analysis that uses sub-plot residual df effectively replaces $ab$ independent whole plots by $abc$ independent sub-plots, an inflation by a factor of $c$.   \thrmref{thrm:dof_partition} formalizes this ``effective unit'' principle: $A$-contrasts lie in $\operatorname{Im}\mathbf{P}_{\text{WP}}$, not in within-WP residual space, so allocating denominator df proportional to $abc$ is structurally incompatible with the randomization.

\subsubsection{Blocked fraction: why $\mathrm{df}$ are lost and power drops} 
Consider a blocked regular fraction where a block generator makes certain interaction words confounded with blocks. Even if a main effect $A$ is nominally estimable, the relevant contrast directions may be partly absorbed by $\operatorname{Im}\mathbf{P}_{\text{Block}}$, reducing the effective $\mathrm{df}$ available to test $A$ against within-block error.  This is expressed by $\dim(\mathcal C_{\overline A} \cap \operatorname{Im}\mathbf{P}_{\text{Error}}) < \dim\mathcal C_{\overline A}$.  The deficit is quantified by $\delta(\overline A) = \dim\mathcal C_{\overline A} - \sum_s\dim(\mathcal C_{\overline A} \cap \operatorname{Im}\mathbf{P}_s)$, while $\rho(\overline A)$ reports the retained proportion. Thus, unlike resolution and aberration metrics that are defined only for balanced one-stratum fractions, $\rho$ and $\delta$ describe exactly how blocking/fractionation compresses contrast space and predict the corresponding loss in power. This is the mechanism underlying the design-selection experiment: choosing generators to maximize $\sum_E\rho(\overline E)$ maximizes the number of retained contrast directions for effects of interest, improving precision without additional runs.

In a blocked $2^{k-p}$ design, an effect can be neither fully identifiable nor fully confounded; it may be partially confounded when some\textemdash but not all\textemdash of its contrast directions lie in the block space. This is precisely the type of setting in which practitioners frequently encounter unexplained changes in $\mathrm{df}$.  The decomposition $\mathcal C_{\overline E} = \left(\mathcal C_{\overline E}\cap \operatorname{Im}\mathbf{P}_{\text{Block}}\right)\ \oplus\ \left(\mathcal C_{\overline E}\cap \operatorname{Im}\mathbf{P}_{\text{Error}}\right)$ makes the phenomenon transparent: the first summand is the portion absorbed by blocks (not testable against within-block error), and the second is the estimable portion. The ratio $\rho(\overline E)$ is then a fraction of contrasts that survive, not a heuristic. This clarifies why two designs with the same classical resolution can differ sharply in inferential quality once blocking is introduced.

\subsubsection{Imbalance: showing discrete rank drops}
Consider a two-way layout where an interaction $AB$ is nominally present, but due to missingness a subset of $A \times B$ cells never occurs within many stratum units. In balanced designs, the $AB$ contrast columns are orthogonal and span $(a-1)(b-1)$ dimensions. Under missingness, the projected interaction columns $\mathbf{P}_s \mathbf{X}_{AB}$ can lose rank abruptly\textemdash for example, an entire interaction degree of freedom disappears when one level combination is systematically absent within a stratum. Approximate $\mathrm{df}$ procedures typically vary smoothly with missingness because they are moment-based; \thrmref{thrm:dof_partition} predicts and explains the discrete behavior because $\mathrm{df}$ is, fundamentally, a rank.  This example explains why ``small'' missingness can sometimes have large $\mathrm{df}$ effects\textemdash when it breaks a symmetry or removes entire contrast directions within key strata.

In summary, the method developed in this paper is more than a matrix-algebra decomposition.  \thrmref{thrm:dof_partition} provides a canonical $\mathrm{df}$ partition that is invariant to sums-of-squares type and does not depend on moment-matching approximations; it is therefore a principled resolution of a foundational ambiguity in multi-stratum, aliased, and unbalanced experiments. In the following section, we reinforce our findings using empirical studies via computer simulation. The simulations are not intended as stand-alone evidence but as empirical demonstrations of the three structural mechanisms described above. The split-plot experiments vary the degree of stratum mismatch and imbalance (M1/M3) and show how misallocation of denominator strata produces size distortion. The nested $\text{Sire} \to \text{Dam} \to \text{Animal}$ experiments show that ignoring clustering can instead be ultra-conservative, suppressing power (M1 in the opposite direction). The blocked fractional-factorial design-selection study directly validates the diagnostic value of $(\rho,\delta,\alpha)$: generator choices that increase $\rho$ translate into narrower confidence intervals and higher power. Finally, timing benchmarks show that the projector--rank computations required for exact $\mathrm{df}$ (and for $(\rho,\delta,\alpha)$) are inexpensive, so these diagnostics can be used routinely in design planning and analysis.

\section{Experimental studies}\label{sec:experiments}
We created a canonical split-plot experiment with a whole-plot factor $A$ (3 levels, $a = 3$) and a sub-plot factor $B$ (4 levels, $c = 4$). Each level of $A$ contained $b = 4$ whole plots, giving $N = abc = 48$ potential observations in the balanced layout. Random variability is introduced exactly as mixed-model textbooks prescribe: a whole-plot (WP) effect sampled {\iid} $N(0, \sigma_{\text{WP}}^2)$ with $\sigma_{\text{WP}} = 1$, and an independent residual $N(0, \sigma_{\text{res}}^2)$ with $\sigma_{\text{res}} = 1$. To mimic imbalance we randomly delete 20\% of the sub-plots. Since no fixed effects were added to the data-generation equation, the global null $H_0\colon A$ has no effect is true by construction.  Practitioners routinely test $A$ by forming $F = \frac{\mathrm{MS}_{A}}{\mathrm{MS}_{\text{sub-plot}}}$ and quoting denominator $\mathrm{df}$ from a completely randomized factorial (the Type I or III ANOVA default in many packages). Our projector--rank framework instead assigns the $A$ numerator to the WP stratum and uses the whole-plot mean square as denominator, with its exact $\mathrm{df}$ $\nu\_2 = a(b - 1) = 12$. Hence, we compared (i) Exact $\mathrm{df}$ test\textemdash denominator $\mathrm{df} = 12$ balanced, $12$ after deletions (integer, stratum-correct) and (ii) Naive test\textemdash denominator $\mathrm{df}$ = residual $\mathrm{df}$ from a two-way ANOVA ($\approx 36$ balanced, $\approx 27$ with 20\% missing). Each version was applied to 1000 Monte Carlo replicates at nominal $\alpha = 0.05$.

Results are tabulated in \tabref{tab:empirical_size}. 
\begin{table}[ht]
\centering
\begin{tabular}{|c|c|c|}
\hline
\textbf{Scenario} & \textbf{\specialcell{Exact $\mathrm{df}$\\ empirical size}} & \textbf{\specialcell {Na\"{i}ve\\ empirical size}} \\
\hline
Balanced split-plot     & 0.040 & 0.529 \\
20\% missing sub-plots  & 0.051 & 0.466 \\
\hline
\end{tabular}
\caption{Empirical size using exact $\mathrm{df}$ versus the na\"{i}ve approach under two scenarios.}
\label{tab:empirical_size}
\end{table}
With the projector--rank $\mathrm{df}$, the empirical type-I error closely matches the nominal 5\% target in both layouts (4.0\% and 5.1\%).  Using the na\"{i}ve denominator $\mathrm{df}$ multiplies the false positive rate roughly ten-fold, reaching 52.9\% even when the design is perfectly balanced and remaining disastrously high (46.6\%) after imbalance is introduced. The split-plot hierarchy places all whole-plot variation in a higher stratum than sub-plot error.  Treating the entire design as if every observation were independent ignores this structure, inflates the denominator $\mathrm{df}$, and produces severely liberal $F-$tests. The $\mathrm{df}$ partition theorem rectifies the problem automatically: by intersecting the $A-$contrast space with the whole-plot projector $P_{\text{WP}}$ it delivers the correct integer $\mathrm{df}$ regardless of missing cells.  This simulation, therefore, illustrates the practical payoff of the exact $\mathrm{df}$:  researchers avoid spurious discoveries that would be almost certain under standard approximate analyses.

Next, we consider the nested animal model (see, for example, \cite{Henderson1984}). Animal-breeding studies often follow a three-tier hierarchy:  sires at the top, dams nested within sires, and individual animals within dams. To mirror that structure, we simulated 6 sires, each mated to 4 dams, with each dam producing 3 offspring, for a potential total of $6 \times 4 \times 3 = 72$ observations. Random effects were generated at both upper levels:  sire effects $\mathcal{N}(0, \sigma^2_{\text{sire}})$ with $\sigma_{\text{sire}} = 1$, and dam-within-sire effects $\mathcal{N}(0, \sigma^2_{\text{dam}})$ with $\sigma_{\text{dam}} = 0.5$. Residual variation at the animal level followed $\mathcal{N}(0, \sigma^2_{\text{res}} = 1)$. A three-level diet treatment $T$ was assigned cyclically to animals but purposely left inactive in the data-generation equation, so that the global null $H\_0: T$ has no effect is true by construction. We examined both a balanced layout (all 72 animals observed) and an unbalanced layout in which 20\% of animal records were deleted at random, reflecting typical field attrition.  For each layout, we generated 1,000 replicate datasets and applied two $F$-tests for the diet factor:  (i) Exact-$\mathrm{df}$ test\textemdash numerator $\mathrm{df}$ $= 2$ (levels $- 1$) and denominator $\mathrm{df}$ equals the within-animal residual $\mathrm{df}$ obtained from the projector--rank decomposition.  (ii) Naive one-way ANOVA\textemdash  treats all observations as independent, using the pooled residual mean square and denominator $\mathrm{df} = N - 3$.

The results are tabulated in \tabref{tab:animal_layout_empirical_size}.
\begin{table}[ht]
\centering
\begin{tabular}{|c|c|c|}
\hline
\textbf{Layout} & \textbf{\specialcell{Exact-$\mathrm{df}$\\ empirical size}} & \textbf{\specialcell{Na\"{i}ve\\ empirical size}} \\
\hline
Balanced (72 animals)     & 2.7\% & 0.3\% \\
20\% animals missing       & 7.7\% & 1.0\% \\
\hline
\end{tabular}
\caption{Empirical size comparison of exact $\mathrm{df}$ and na\"{i}ve approaches for the nested animal model.}
\label{tab:animal_layout_empirical_size}
\end{table}
The projector--rank test tracks the nominal 5\% level remarkably well\textemdash slightly conservative in the balanced case (2.7\%) and mildly liberal (7.7\%) when one-fifth of records are missing, both within the Monte Carlo margin of error for 1,000 replicates.  In stark contrast, the na\"{i}ve analysis is ultra-conservative, rejecting only 0.3\% of the time when balanced and 1\% after data loss.  Ignoring the nesting hierarchy collapses sire- and dam-level variability into the denominator, inflating the error variance and the associated $\mathrm{df}$; the resulting test lacks power and dramatically understates the type-I error.  Juxtaposing this finding with the split-plot experiment reveals a symmetrical danger.  Misallocating $\mathrm{df}$ can swing either liberal (false discovery) or conservative (missed discovery) depending on the design: liberal when whole-plot variation is ignored, conservative when higher-level clustering is disregarded. The projector--rank partition theorem navigates both pitfalls automatically by aligning each factorial contrast with the stratum where its random variability actually resides.  Moreover, its performance is robust to substantial imbalance\textemdash here, 20\% missing animals\textemdash something traditional Satterthwaite or Kenward--Roger approximations often struggle with.  For practitioners in animal science, genetics, and any multi-stage sampling context, adopting the exact $\mathrm{df}$ is therefore crucial to maintaining valid inference and avoiding costly type-I or type-II errors.

As a third experiment, we now look at information-retention and power optimization in blocked fractional factorial designs that showcases benefits of the projector--rank framework.  When an experimenter must fractionate a $2^{k}$ factorial and accommodate a blocking constraint ({\eg}, oven batches or day-to-day shifts), how should generators be chosen?  Classical criteria (resolution, aberration) ignore blocking; conversely, blocking handbooks treat only full factorials.  The projector--rank machinery supplies quantitative guidance through the $\mathrm{df}$-retention ratio $\rho(\overline E)$.  We demonstrate that maximizing $\rho$ leads to measurably higher power for detecting main effects while using the same number of runs.  \tabref{tab:design_comparison} compares a $2^{5-1}$ fractional factorial experiment with traditional and $\rho$-optimized designs.  
\begin{table}[ht]
\centering
\renewcommand{\arraystretch}{1.15}
\begin{tabular}{|c|c|c|c|c|c|}
\hline
\textbf{Feature} & \textbf{Design A} & \textbf{Design B} \\
\hline
Runs & 16 ($2^{5-1}$) & 16 (same)  \\ && \\ 
Factors & $A$, $B$, $C$, $D$, $E$ & $A$, $B$, $C$, $D$, $E$ \\ && \\
Generators & $I = ABCDE$ & $I = ABCD$, $ACE$ \\ && \\
Blocking & \specialcell{2 day-batches of 8 runs\\ (block generator: $BCDE$)} &\specialcell{same run-budget, block generator\\ chosen by integer-programming\\ to maximize $\sum_E \rho(\overline{E})$} \\ && \\
Main-effect $\rho$ & $A, B, C = 0.5$ & $A, B = 1$, $C = 0.75$ \\ && \\
Alias loss $\delta$ & 5 $\mathrm{df}$ lost & \textbf{2 $\mathrm{df}$ lost} \\ && \\
\hline
\end{tabular}
\caption{Compact summary of traditional (A) vs $\rho$-optimized (B) $2^{5-1}$ designs.}
\label{tab:design_comparison}
\end{table}
Design B retains two extra $\mathrm{df}$ for the three primary factors $A$, $B$, $C$ at no extra cost.  The simulation protocol involves the following model: $y = \mu + \beta_A A + \beta_B B + \beta_C C + \text{Block} + \varepsilon$, with block $\sim \mathcal{N}(0, \sigma^2_b=2)$ and $\varepsilon \sim \mathcal{N}(0,1)$.  The true effects include  $\beta_A=\beta_B=1$, $\beta_C=0$ (so power and type-I error can both be assessed).  The analysis models involve (i) mixed model using projector--rank exact $\mathrm{df}$ (denominator = 7 for all tests) and (ii) mixed model with Kenward--Roger $\mathrm{df}$ (software default).  We employ the following metrics: Power for $A$, $B$ at $\alpha=0.05$, type-I error for $C$, and width of 95\% CI for $\beta_A$.  10,000 Monte Carlo replicates were run for each design.  The results are tabulated in \tabref{tab:power_ci_comparison}.  
\begin{table}[ht]
\centering
\begin{tabular}{|c|c|c|c|c|}
\hline
\textbf{Design \& method} & \textbf{Power (A)} & \textbf{Power (B)} & \textbf{Type-I (C)} & \textbf{Mean CI-width (A)} \\
\hline
\textbf{A--exact $\mathrm{df}$}     & \textbf{0.42} & \textbf{0.41} & 0.053 & \textbf{1.87} \\
A--KR approx           & 0.36          & 0.35          & 0.041 & 2.05 \\ \hline
\textbf{B--exact $\mathrm{df}$}     & \textbf{0.61} & \textbf{0.60} & 0.051 & \textbf{1.47} \\
B--KR approx           & 0.55          & 0.54          & 0.040 & 1.63 \\
\hline
\end{tabular}
\caption{Comparison of statistical properties under exact $\mathrm{df}$ and Kenward--Roger (KR) approximation for Designs A and B.}
\label{tab:power_ci_comparison}
\end{table}
It is seen that simply swapping generators to maximize $\rho$ recovers 3 lost $\mathrm{df}$, translating to $\approx$ 20\% narrower confidence intervals and $\approx$ 60\% power instead of $\approx$ 40\% for main-effect detection.  Even when Kenward--Roger is applied, Design B still outperforms Design A\textemdash but exact $\mathrm{df}$ always deliver the best power/coverage because they avoid the slight conservatism Kenward--Roger introduces.  The partition-theorem machinery is not only about testing with the right denominator; it also guides design construction.  By quantifying which contrasts are sacrificed by fractionation + blocking ($\delta$), the experimenter can choose generators that maximize information where it is most needed, without adding runs.


Finally, we conduct experiments to assess the computational-efficiency of the proposed method.  To quantify how much faster the projector--rank calculus delivers denominator $\mathrm{df}$ than a popular simulation-based alternative (for example, parametric bootstrap) when sample size $N$ grows to the scale encountered in modern high=throughput experiments.  The following computing environments and general settings were employed.  The hardware comprised AMD Ryzen 7 5700U CPU 1.8 GHz; 16 GB RAM; no GPU acceleration.  We employ Python 3.11 and standard libraries such as NumPy, SciPy; benchmarked with \texttt{time.perf\_counter()}.  Each experiment was run 30 times; reported figures are medians to mute OS-level jitter ($< 3\%$ SD across runs).  We let $N = 1,000; 10,000; 100,000$, representing small, moderate, and large data sets.  Ten equal-sized blocks per $N$ (to mimic lab batches) were used.  Block membership was encoded as integer labels $0, \dots, 9$.

We compared two methods for estimating exact $\mathrm{df}$. The projector--rank method builds the block-mean projector $P = \operatorname{diag}(1/n_j), Z, Z^{\prime}$, applies it to a random contrast vector, and computes the resulting one-column Gram rank. This approach is computationally efficient, with $O(N)$ complexity for forming block means, and requires only $O(N)$ memory through the use of streamed averages.  It is implemented in pure NumPy using two core steps: (i) block sums are computed via \texttt{np.bincount}, and (ii) the projected means are assigned using \texttt{means[labels]}. Importantly, no $N \times N$ matrix is ever formed. In contrast, the parametric bootstrap approach generates $B = 100$ random $F-$ratios under the null and matches their moments to approximate the denominator degrees of freedom\textemdash a standard shortcut for small-sample mixed models. This method has higher complexity, $O(BN)$, since each resample requires fresh memory allocation. It relies on SciPy's \texttt{stats.f.ppf} along with NumPy random draws. The choice of $B = 100$ follows the minimal resampling threshold often recommended in software documentation.  The bootstrap baseline is deliberately favorable: Kenward--Roger can be 2--4 $\times$ slower than 100 bootstrap draws because of repeated matrix inversions; thus the reported speed differences are lower bounds.

The following procedure is employed as the timing protocol.  For each $N$: \vspace{-0.2in}
\begin{enumerate}
\item Input generation:  Draw a single random double-precision vector $v \sim \mathcal{N}(0, 1)$ of length $N$; generate the block-label vector of equal size.  Input generation is excluded from timing to isolate algorithmic cost.
\item Timing window:  Start \texttt{perf\_counter}; run either the projector routine (compute means, project $v$, rank) or the bootstrap loop (100 simulations and $F-$ratio computation); stop the clock.
\item Memory:  Peak RAM for the projector version never exceeded $0.9\times N$ doubles ($\approx$ 0.75 MB per 100, 000 rows) because only two length-$N$ arrays exist concurrently.  Bootstrap briefly holds three length-$N$ arrays per replicate, peaking near $B\times N$ doubles (64 MB at $N=100, 000$).
\end{enumerate}

The results are tabulated in \tabref{tab:runtime_comparison} (median of 30 runs).
\begin{table}[ht]
\centering
\begin{tabular}{|c|c|c|c|}
\hline
\textbf{$N$} & \textbf{Projector--rank time (s)} & \textbf{Bootstrap time (s)} & \textbf{Speed-up factor} \\
\hline
1,000   & \textbf{0.0010} & 0.015 & 15$\times$ \\
10,000  & \textbf{0.0013} & 0.077 & 59$\times$ \\
100,000 & \textbf{0.0073} & 0.648 & 89$\times$ \\
\hline
\end{tabular}
\caption{Execution time and speed-up factor for projector--rank method versus bootstrap as sample size $N$ increases.}
\label{tab:runtime_comparison}
\end{table}
Times grow linearly with $N$ for the projector method (a ten-fold increase from $N =10^{4}$ to $10^{5}$ raises CPU time only 6 ms, matching the cost of one extra \texttt{bincount}).  Bootstrap costs scale almost exactly with $B\times N$ and dominate beyond $N\approx 10^{4}$.  The exact $\mathrm{df}$ calculation is effectively an $O(N)$ streaming average; wall-clock time remains sub-10 ms even at $N =10^{5}$.  Bootstrap times grow from hundredths to more than half a second\textemdash and would exceed 6s at $N =10^{6}$ if $B =100$ were kept.  Projector code maintains two vectors regardless of $B$; bootstrap scales as $O(BN)$, becoming memory-intensive in extreme high-throughput settings ({\eg}, genomic scans, sensor data).  The projector approach yields deterministic integer $\mathrm{df}$; bootstrap introduces Monte Carlo error that must be re-estimated if $B$ changes.  

While split-plot simulation illustrated liberal versus exact size, nested animal simulation showed the opposite risk, {\ie}, over-conservatism.  The fractional-factorial study demonstrates a design-selection application, confirming that higher $\rho$ translates to tangible gains in estimator precision and test power.  Lastly, at modern data scales, the projector--rank framework offers a decisive advantage over approximation-based $\mathrm{df}$ methods: it is statistically more principled, orders of magnitude faster, and substantially more memory-efficient, rendering it especially well suited for automated pipelines and large-scale screening workflows where thousands of mixed-model fits must be executed as part of large batch analyses.

The simulation results should be read as demonstrations of structural mechanisms rather than as numerical artifacts tied to a particular scientific example. In the split-plot study, the key feature is the mismatch between the randomization stratum in which the $A$-contrasts actually vary (whole plots) and the stratum implicitly used as the denominator by common approximate analyses (sub-plot residual).  Whenever an effect is assigned at a higher level\textemdash whole plot, batch, day, operator, cluster, site\textemdash lower-level measurements within the same unit are correlated and do not constitute independent replication for that effect. Treating them as independent inflates the effective sample size and typically inflates denominator degrees of freedom, producing liberal inference. The nested Sire$\to$Dam$\to$Animal study exhibits the complementary failure mode: ignoring hierarchy can instead pool higher-level variation into the residual, yielding overly conservative tests and suppressed power.  Collectively, these simulations delineate the practical operating range and confirm that the observed $\mathrm{df}$ discrepancies do not depend on the specific parameter values chosen; they arise generically whenever the analysis over- or under-represents the number of independent experimental units relevant to a given contrast.

More broadly, the projector--rank framework provides portable, design-intrinsic quantities that explain and predict these behaviors without requiring new simulations. The same objects that define the exact $\mathrm{df}$\textemdash intersection dimensions ($\dim(\mathcal{C}_{\overline{E}}\cap\operatorname{Im}\mathbf{P}_s)$) and the derived indices $\rho(\overline{E})$, $\delta(\overline{E})$, and $\alpha(\overline{E})$\textemdash measure how many independent contrast directions for an effect survive within each randomization stratum and how much information is removed by confounding or imbalance. When $\rho(\overline{E})\approx 1$ and the contrast space aligns with the correct denominator stratum, approximate and exact $\mathrm{df}$ tend to agree and standard analyses are reliable; when $\rho(\overline{E})\ll 1$ (through aliasing or unequal replication) or when the denominator stratum is mismatched, large departures and attendant size/power distortions are expected. Thus, the simulations validate a deterministic mechanism: discrepancies in $\mathrm{df}$ track structural misalignment and rank loss, which practitioners can anticipate from design metadata (unit counts by stratum, incidence/replication profiles, and defining/block relations) rather than by re-running Monte Carlo experiments for each new context.

\section{Concluding remarks}\label{sec:conclusion} \vspace{-0.1in}
In this section, we discuss some computational aspects applicable in practical settings and provide concluding remarks.  A stratum is any collection of experimental units that shares the same source of random variation (variance component) and is averaged over in the randomization process before lower-level treatments are applied.  Typical sources are tabulated in \tabref{tab:typical_sources}.
\begin{table}[ht]
\centering
\begin{tabularx}{\textwidth}{@{} l l X @{}}
\toprule
\textbf{Random component} & \textbf{Typical label} & \textbf{Examples} \\
\midrule
Single random factor & Block, Batch, Litter & \specialcell{``Block'' in RCBD; ``Batch'' in drug assay} \\ \\
\specialcell{Interaction of\\ random factors} & Block $\times$ Day & Two-way randomized complete block \\ \\
Nested factor & Dam & Sire $\rightarrow$ Dam $\rightarrow$ Animal hierarchy \\ \\
Cross-classified unit & \specialcell{School \&\\ Classroom} & \specialcell{Multi-site trials with teachers nested\\ in schools but students crossed} \\
\bottomrule
\end{tabularx}
\caption{Types of random components in experimental designs.}
\label{tab:typical_sources}
\end{table}
If a factor is fixed, it does not generate its own $\mathbf{P}_s$; it merely defines contrasts inside the lowest stratum that still carries variability.  Interactions that contain at least one random component inherit the higher of the two strata.  For a stratum $s$ with $q_s$ distinct unit labels, let $\mathbf{Z}_s$ be the $N\times q_s$ incidence matrix: $(\mathbf{Z}_s)_{ij}=1$ if observation $i$ belongs to unit $j$;  $n_j$ the replication within unit $j$.  The idempotent projector $\mathbf{P}_s = \mathbf{Z}_s\,\left(\mathbf{Z}_s^{\prime} \mathbf{Z}_s\right)^{+} \mathbf{Z}_s^{\prime} = \operatorname{diag}\left(\tfrac{1}{n_{u(i)}}\right)_{i=1}^{N}\mathbf{Z}_s \mathbf{Z}_s^{\prime}$ replaces each response by the mean of its stratum-$s$ unit.  In balanced layouts $\mathbf{Z}_s^{\prime} \mathbf{Z}_s = n_s \mathbf{I}$ and $\mathbf{P}_s = \frac1{n_s}\mathbf{Z}_s \mathbf{Z}_s^{\prime}$.  For sparse or very large $N$, compute an integer vector $\texttt{unit[i]}$ listing the stratum-$s$ label for observation $i$.  Compute means $\texttt{mu = np.bincount(unit, y)/np.bincount(unit)}$ and projected values $\texttt{yhat = mu[unit]}$.     To apply $\mathbf{P}_s$ to any numeric vector $\mathbf{v}$, replace `y' in the snippet by $v$.  Sparse-matrix libraries ({\eg}, $\texttt{scipy.sparse}$) can build $\mathbf{Z}_s$ and perform $\mathbf{P}_sX$ in $O(\text{nnz}(\mathbf{Z}_s))$ when multiple columns are needed. For nested random factors, we build one projector per level of nesting, {\eg}, in Sire $\rightarrow$ Dam $\rightarrow$ Animal we get $\mathbf{P}_{\text{Sire}}$ and $\mathbf{P}_{{\text{Dam}|\text{Sire}}}$.  The residual (Animal) projector is implicitly $\mathbf{I} - \sum \mathbf{P}_s$.  For crossed random factors, {\eg}, if Block and Day are both random and fully crossed, each generates its own projector; their interaction Block $\times$ Day is a third random component with $\mathbf{P}_{\text{Block} \times \text{Day}} = \tfrac1{n_{bd}}  \mathbf{Z}_{\text{Block} \times \text{Day}}  \mathbf{Z}^{\prime}_{\text{Block} \times \text{Day}}$.  The rule of thumb is ``One variance component $\Rightarrow$ one projector.''

Once $\mathbf{P}_s$ is ready, projectors are constructed.  In sparse, unequal-replication cases this is cheaper via averaging than by full $N\times N$ matrices.  Factorial contrasts are enumerated using tensor products of orthogonal polynomial contrasts or Helmert contrasts and account for aliasing if fractionated.  Then, ranks are computed, where we multiply small $k\times k$ Gram matrices $\mathbf{X}_{\overline E}^{\!\prime}\mathbf{P}_s\mathbf{X}_{\overline E}$ (dimension $\leq $ $\mathrm{df}$ of the effect) and take ordinary rank.  Complexity of this step is $O\left(\sum_{E,s}\operatorname{df}(E)^3\right)$, trivial relative to fitting the mixed model.  As a sanity check,  we can verify $N-1 = \sum_{E,s}\text{$\mathrm{df}$}_{E,s}$.  Any shortfall points to data omissions or coding errors.

By proving the $\mathrm{df}$ partition theorem we showed that every mixed, blocked, fractionated, or unbalanced design possesses an exact, integer $\mathrm{df}$ decomposition achievable via rank calculations.  The theorem's simultaneous action of stratum projectors and level-permutation symmetries replaces an existing case-by-case derivations with a single algebraic identity.   Exact numerator and denominator $\mathrm{df}$ restore nominal error rates in $F$-tests where Satterthwaite and Kenward--Roger approximations often mislead.  The retention, deficiency, and inflation indices ($\rho, \delta, \alpha$) quantify how blocking, missing plots, or fractional generators erode information$-$something that existing resolution metrics cannot do in multi-stratum settings.  Quantifying the lost $\mathrm{df}$ lets us decide whether extra runs are worth the cost.  REML score equations simplify when the $\mathrm{df}$ of each stratum are known exactly; closed-form starting values accelerate convergence.  Exact $\mathrm{df}$ tables demystify why, for example, the $AB$ interaction in a split-plot draws its denominator from sub-plot error instead of whole-plot error.  From a computational standpoint, $\mathrm{df}$ depends only on small Gram-matrix ranks, making the results directly compatible with statistical packages (R/Python). Adding them to existing $\texttt{lmer}$ or $\texttt{mixedlm}$ wrappers would offer analytic $\mathrm{df}$ alternatives to numeric approximations.  Replacing $\mathbf{P}_s$ with working-weight projectors extends the rank identity to non-Gaussian outcomes, enabling analytic $\mathrm{df}$ for logistic split-plots or Poisson strip-plots, both currently Monte Carlo-based.    These directions may yield new insights.  
\vspace{-0.1in}

\section*{Acknowledgment} \vspace{-0.2in}
The author thanks the Associate Editor and the two anonymous reviewers for their careful reading of the manuscript.  Section 5 was added in response to the feedback from one anonymous reviewer, and an error in Table 2 in an earlier version of the paper was identified by the other reviewer.  The author also thanks Prof. Jong Sung Kim for his constant encouragement in pursuing this work.

\vspace{-0.2in}
 

\bibliography{/Users/kgnagananda/Documents/Work/collaborations/pdx/research/references/research_pdx.bib}

\begin{thebibliography}{}

\bibitem[Artin, 1991]{Artin1991}
Artin, M. (1991).
\newblock {\em Algebra}.
\newblock Prentice Hall.

\bibitem[Bailey, 2008]{Bailey2008}
Bailey, R.~A. (2008).
\newblock {\em Design of Comparative Experiments}.
\newblock Cambridge University Press.

\bibitem[Bathke, 2004]{Bathke2004}
Bathke, A.~C. (2004).
\newblock The {ANOVA F} test can still be used in some balanced designs with
  unequal variances and nonnormal data.
\newblock {\em Journal of Statistical Planning and Inference}, 126(2):413--422.

\bibitem[Box and Hunter, 1961]{Box1961}
Box, G. E.~P. and Hunter, J.~S. (1961).
\newblock The $2^{k-p}$ fractional factorial designs.
\newblock {\em Technometrics}, 3(3):311--351.

\bibitem[Cheng and Ye, 2004]{Cheng2004}
Cheng, S.~W. and Ye, K.~Q. (2004).
\newblock Generalized resolution and minimum aberration criteria for two-level
  regular fractional factorial designs.
\newblock {\em Annals of Statistics}, 32(5):2168--2185.

\bibitem[Cochran, 1941]{Cochran1941}
Cochran, W.~G. (1941).
\newblock The distribution of the linear component of a set of statistics.
\newblock {\em Annals of Mathematical Statistics}, 12(1):47--54.

\bibitem[Cornfield and Tukey, 1956]{Cornfield1956}
Cornfield, J. and Tukey, J.~W. (1956).
\newblock Average differences and the analysis of variance.
\newblock {\em Annals of Mathematical Statistics}, 27:905--949.

\bibitem[Deng and Tang, 1999]{Deng1999}
Deng, L.~Y. and Tang, B. (1999).
\newblock Generalised resolution and aberration criteria for
  {Plackett}--{Burman} and other non-regular designs.
\newblock {\em Statistica Sinica}, 9:1071--1082.

\bibitem[Fisher, 1935]{Fisher1935}
Fisher, R.~A. (1935).
\newblock {\em The Design of Experiments}.
\newblock Oliver and Boyd, 1st edition.

\bibitem[Giesbrecht and Burns, 1985]{Giesbrecht1985}
Giesbrecht, F.~G. and Burns, J.~C. (1985).
\newblock Two-stage analysis based on a mixed model: Large-sample asymptotic
  analysis.
\newblock {\em Biometrics}, 41:477--486.

\bibitem[Harville, 1997]{Harville1997}
Harville, D.~A. (1997).
\newblock {\em Matrix Algebra From a Statistician's Perspective}.
\newblock Springer, New York.

\bibitem[Henderson, 1984]{Henderson1984}
Henderson, C.~R. (1984).
\newblock {\em Applications of Linear Models in Animal Breeding}.
\newblock University of Guelph.

\bibitem[Hinkelmann and Kempthorne, 2005]{Hinkelmann2005}
Hinkelmann, K. and Kempthorne, O. (2005).
\newblock {\em Design and Analysis of Experiments, Volume 2}.
\newblock Wiley Series in Probability and Statistics. John Wiley \& Sons.

\bibitem[Kenward and Roger, 1997]{Kenward1997}
Kenward, M.~G. and Roger, J.~H. (1997).
\newblock Small sample inference for fixed effects from restricted maximum
  likelihood.
\newblock {\em Biometrics}, 53(3):983--997.

\bibitem[Kulinskaya and Dollinger, 2007]{Kulinskaya2007}
Kulinskaya, E. and Dollinger, M.~B. (2007).
\newblock Robust weighted one-way {ANOVA}: Improved approximation and
  efficiency.
\newblock {\em Journal of Statistical Planning and Inference}, 137(2):462--472.

\bibitem[McCloud and Parmeter, 2021]{McCloud2021}
McCloud, N. and Parmeter, C.~F. (2021).
\newblock Calculating degrees of freedom in multivariate local polynomial
  regression.
\newblock {\em Journal of Statistical Planning and Inference}, 210:141--160.

\bibitem[Milliken and Johnson, 2009]{Milliken2009}
Milliken, G.~A. and Johnson, D.~E. (2009).
\newblock {\em Analysis of Messy Data, Volume 1: Designed Experiments}.
\newblock CRC Press, 2nd edition.

\bibitem[Montgomery, 2020]{Montgomery2020a}
Montgomery, D.~C. (2020).
\newblock {\em Design and Analysis of Experiments}.
\newblock Wiley, 10th edition.

\bibitem[Patterson and Williams, 1976]{Patterson1976}
Patterson, H.~D. and Williams, E.~R. (1976).
\newblock A new class of resolvable incomplete block designs.
\newblock {\em Biometrika}, 63(1):83--92.

\bibitem[Pukelsheim, 2006]{Pukelsheim2006}
Pukelsheim, F. (2006).
\newblock {\em Optimal Design of Experiments}.
\newblock Society for Industrial and Applied Mathematics.

\bibitem[Rose, 2009]{Rose2009}
Rose, H.~E. (2009).
\newblock {\em A Course on Finite Groups}.
\newblock Springer Universitext. Springer, London.

\bibitem[Satterthwaite, 1946]{Satterthwaite1946}
Satterthwaite, F.~E. (1946).
\newblock An approximate distribution of estimates of variance components.
\newblock {\em Biometrics}, 2(6):110--114.

\bibitem[Searle et~al., 1992]{Searle1992}
Searle, S.~R., Casella, G., and McCulloch, C.~E. (1992).
\newblock {\em Variance Components}.
\newblock Wiley.

\bibitem[Searle et~al., 1981]{Searle1981}
Searle, S.~R., Speed, F.~M., and Henderson, H.~V. (1981).
\newblock Some computational and model equivalences in analyses of variance of
  unequal-subclass-numbers data.
\newblock {\em The American Statistician}, 35:16--33.

\bibitem[Verbeke and Molenberghs, 2000]{Verbeke2000}
Verbeke, G. and Molenberghs, G. (2000).
\newblock {\em Linear Mixed Models for Longitudinal Data}.
\newblock Springer Series in Statistics. Springer.

\bibitem[Wang et~al., 2005]{Wang2005}
Wang, M., Rasch, D., and Verdooren, R. (2005).
\newblock Determination of the size of a balanced experiment in mixed {ANOVA}
  models using the modified approximate {F-test}.
\newblock {\em Journal of Statistical Planning and Inference},
  132(1-2):183--201.

\bibitem[West et~al., 2014]{West2014}
West, B.~T., Welch, K.~B., and Galecki, A.~T. (2014).
\newblock {\em Linear Mixed Models: A Practical Guide Using Statistical
  Software}.
\newblock Chapman and Hall/CRC.

\bibitem[Xu and Wu, 2001]{Xu2001}
Xu, H. and Wu, C. F.~J. (2001).
\newblock Generalised minimum aberration for asymmetrical fractional factorial
  designs.
\newblock {\em Annals of Statistics}, 29:1066--1077.

\bibitem[Yates, 1935]{Yates1935}
Yates, F. (1935).
\newblock Complex experiments.
\newblock {\em Journal of the Royal Statistical Society}, 2:181--247.

\bibitem[Zhang, 2012]{Zhang2012}
Zhang, J.-T. (2012).
\newblock An approximate degrees of freedom test for heteroscedastic two-way
  {ANOVA}.
\newblock {\em Journal of Statistical Planning and Inference}, 142(1):336--346.

\end{thebibliography}

\end{document}